\numberwithin{figure}{section}
\numberwithin{table}{section}
\newcommand{\cf}{\textit{cf.\ }} 
\newcommand{\Iverson}[1]{\ensuremath{\left[#1\right]_{\delta}}} 
\DeclareMathOperator{\DGF}{DGF} 
\DeclareMathOperator{\ds}{ds} 
\DeclareMathOperator{\Id}{Id}
\title[Generalized Lambert Series Factorizations]{
       Factorization Theorems for Generalized Lambert Series and Applications
} 
\author[Mircea Merca and Maxie D. Schmidt]{
        Mircea Merca \\ 
        Academy of Romanian Scientists \\ 
        Splaiul Independentei 54, Bucharest, 050094 Romania \\ 
        \href{mailto:mircea.merca@profinfo.edu.ro}{mircea.merca@profinfo.edu.ro} \\ 
        \\ 
        Maxie D. Schmidt \\ 
        School of Mathematics \\ 
        Georgia Institute of Technology \\ 
        Atlanta, GA 30332 USA \\ 
        \href{mailto:maxieds@gmail.com}{maxieds@gmail.com} \\ 
        \href{mailto:mschmidt34@gatech.edu}{mschmidt34@gatech.edu}
} 
\date{\today} 
\keywords{Lambert series; factorization theorem; matrix factorization; partition function; multiplicative function}
\subjclass[2010]{11A25; 11P81; 05A17; 05A19}
\theoremstyle{plain} 
\newtheorem{theorem}{Theorem}
\newtheorem{prop}[theorem]{Proposition}
\newtheorem{cor}[theorem]{Corollary}
\numberwithin{theorem}{section}
\theoremstyle{definition} 
\newtheorem{example}[theorem]{Example}
\newtheorem{remark}[theorem]{Remark}
\begin{document} 

\begin{abstract} 
We prove new variants of the Lambert series factorization theorems studied by 
Merca and Schmidt (2017) which correspond to a more general class of 
Lambert series expansions of the form 
$L_a(\alpha, \beta; q) := \sum_{n \geq 1} a_n q^{\alpha n-\beta} / (1-q^{\alpha n-\beta})$ 
for integers $\alpha, \beta$ defined such that $\alpha \geq 1$ and $0 \leq \beta < \alpha$. 
Applications of the new results in the article are given to restricted divisor sums over 
several classical special arithmetic functions which define the cases of well-known, so-termed 
``ordinary'' Lambert series expansions cited in the introduction. 
We prove several new forms of factorization theorems for Lambert series 
over a convolution of two arithmetic functions which similarly lead to new 
applications relating convolutions of special multiplicative functions to partition 
functions and $n$-fold convolutions of one of the special functions. 
\end{abstract}

\maketitle

\section{Introduction} 

\subsection{Factorizations of generalized Lambert series} 

For fixed $\alpha, \beta \in \mathbb{Z}$ such that $\alpha \geq 1$ and $0 \leq \beta < \alpha$, and an 
arbitrary sequence $\{a_n\}_{n \geq 1}$, 
we consider generalized Lambert series expansions of the form 
\begin{align}
\label{eqn_GenLambertSeries_LaAlphaBetaq_def}
L_a(\alpha, \beta; q) & := \sum_{n \geq 1} \frac{a_n q^{\alpha n-\beta}}{1-q^{\alpha n-\beta}} = 
     \sum_{m \geq 1} b_m \cdot q^m,\ 
     |q^{\alpha}| < 1. 
\end{align} 
The coefficients of the generalized Lambert series expansion on the left-hand-side of the previous 
equation are given by
\begin{align*} 
b_m & = \sum_{\substack{\alpha d-\beta | m}} a_d. 
\end{align*} 
Several well known variants of the ordinary Lambert series expansions studied in 
\cite{MERCA-SCHMIDT1,MERCA-SCHMIDT2,MERCA-LSFACTTHM,SCHMIDT-LSFACTTHM} 
which generate special arithmetic functions are transformed into a 
series of the form in \eqref{eqn_GenLambertSeries_LaAlphaBetaq_def} as follows 
where $\mu(n)$ denotes the 
\emph{M\"obius function}, $\phi(n)$ denotes \emph{Euler's totient function}, 
$\sigma_{\alpha}(n)$ denotes the generalized \emph{sum of divisors function} for 
a fixed $\alpha \in \mathbb{C}$, 
$\lambda(n)$ denotes \emph{Liouville's function}, $\Lambda(n)$ denotes 
\emph{von Mangoldt's function}, $\omega(n)$ defines the number of 
distinct primes dividing $n$, and $J_t(n)$ is 
\emph{Jordan's totient function} for some fixed $t \in \mathbb{C}$
\cite[\S 27]{NISTHB} \cite[\cf \S 1, \S 3]{MERCA-SCHMIDT1}:
\begin{align} 
\label{eqn_WellKnown_LamberSeries_Examples} 
\sum_{n \geq 1} \frac{\mu(n) q^{\alpha n - \beta}}{1-q^{\alpha n - \beta}} & = 
     \sum_{m \geq 1} \sum_{\alpha d-\beta|m} \mu(d) \cdot q^m \\ 
\notag
\sum_{n \geq 1} \frac{\phi(n) q^{\alpha n - \beta}}{1-q^{\alpha n - \beta}} & = 
     \sum_{m \geq 1} \sum_{\alpha d-\beta|m} \phi(d) \cdot q^m  \\ 
\notag
\sum_{n \geq 1} \frac{n^{x} q^{\alpha n - \beta}}{1-q^{\alpha n - \beta}} & =  
     \sum_{m \geq 1} \sum_{\alpha d-\beta|m} d^x \cdot q^m  \\ 
\notag
\sum_{n \geq 1} \frac{\lambda(n) q^{\alpha n - \beta}}{1-q^{\alpha n - \beta}} & = 
     \sum_{m \geq 1} \sum_{\alpha d-\beta|m} \lambda(d) \cdot q^m \\ 
\notag 
\sum_{n \geq 1} \frac{\Lambda(n) q^{\alpha n - \beta}}{1-q^{\alpha n - \beta}} & = 
     \sum_{m \geq 1} \sum_{\alpha d-\beta|m} \Lambda(d) \cdot q^m \\ 
\notag 
\sum_{n \geq 1} \frac{|\mu(n)| q^{\alpha n - \beta}}{1-q^{\alpha n - \beta}} & = 
     \sum_{m \geq 1} \sum_{\alpha d-\beta|m} |\mu(d)| \cdot q^m \\ 
\notag 
\sum_{n \geq 1} \frac{J_t(n) q^{\alpha n - \beta}}{1-q^{\alpha n - \beta}} & = 
     \sum_{m \geq 1} \sum_{\alpha d-\beta|m} J_t(d) \cdot q^m. 
\end{align}
Moreover, in the special case where $(\alpha, \beta) := (2, 1)$, we have another Lambert series expansion 
generating the \emph{sum of squares function}, $r_2(n)$, of the form \cite[\S 17.10]{HARDYANDWRIGHT} 
\begin{align*} 
\sum_{m \geq 1} r_2(m) q^m & = \sum_{n \geq 1} \frac{4 \cdot (-1)^{n+1} q^{2n-1}}{1-q^{2n-1}}. 
\end{align*} 
For the remainder of the article we treat the generalized series parameters $\alpha, \beta$ to be 
defined by the constraints above and the sequence $\{a_n\}_{n \geq 1}$ to be arbitrary unless 
otherwise specified. 

Within this article, we extend the so-termed ``\textit{factorization theorems}'' proved in 
\cite{MERCA-SCHMIDT1,MERCA-SCHMIDT2,MERCA-LSFACTTHM,SCHMIDT-LSFACTTHM} 
to the generalized Lambert series cases 
defined in \eqref{eqn_GenLambertSeries_LaAlphaBetaq_def}. 
In particular, we consider factorizations of the form 
\begin{align} 
\label{eqn_GenFactThmExp_def_v1} 
L_a(\alpha, \beta; cq) & = \frac{1}{C(q)} \sum_{n \geq 1} \sum_{k=1}^n s_{n,k} \bar{a}_k (cq)^n, 
\end{align} 
where $\bar{a}_n$ depends only on the $s_{n,k}$ and on the sequence of $a_n$. 
In general, when $\alpha > 1$ and $\bar{a}_n \equiv a_n$ for all $n \geq 1$, 
we typically see that the square matrices, 
$A_n := (s_{i,j})_{1 \leq i,j \leq n}$, are not invertible as in the cases of the 
first factorization theorems proved in the references 
\cite{MERCA-SCHMIDT1,MERCA-SCHMIDT2,SCHMIDT-LSFACTTHM}. 
However, we may still proceed to define an inverse sequence, $s_{n,k}^{(-1)}$, as in the references 
with a corresponding non-singular matrix representation which implicitly defines the sequence of 
$s_{n,k}$ as in \cite{MERCA-SCHMIDT2} for $\bar{a}_n \not{\equiv} a_n$. In these cases, we have a 
matrix representation of the factorization theorem in \eqref{eqn_GenFactThmExp_def_v1} of the form 
\begin{align*} 
\begin{bmatrix} \bar{a}_1 \\ \bar{a}_2 \\ \vdots \\ \bar{a}_n \end{bmatrix} & = 
     A_n^{-1} \begin{bmatrix} B_0 \\ B_1 \\ \vdots \\ B_{n-1} \end{bmatrix}, 
\end{align*} 
where the sequence of $\{B_{m}\}_{m \geq 0}$ depends on the 
arithmetic function, $b_m$, implicit to the expansion of 
\eqref{eqn_GenLambertSeries_LaAlphaBetaq_def} and the factorization pair, $(C(q), s_{n,k})$. 
When the matrix $A_n$ is non-singular, we have the next determinant-based recurrence relations 
proved as in \cite[\S 2]{MERCA-SCHMIDT1} relating the two sequences, 
$s_{n,k}$ and $s_{n,k}^{(-1)}$. 
\begin{align} 
\label{eqn_snk_snkinv_det-based_recrels} 
s_{n,j}^{(-1)} & = - \sum_{k=1}^{n-j} s_{n,n+1-k}^{(-1)} \cdot s_{n+1-k,j} + 
     \delta_{n,j} \\ 
\notag 
     & = 
     - \sum_{k=1}^{n-j} s_{n,n-k} \cdot s_{n-k,j}^{(-1)} + \delta_{n,j} 
\end{align} 
These identities are symmetric in that these 
identities still hold if one sequence is interchanged with the other. 

\subsection{Significance of our new results} 

In this article, we prove a few variants corresponding to the expansions of 
\eqref{eqn_GenLambertSeries_LaAlphaBetaq_def} which effectively generalize the 
Lambert series factorization theorems found in the first references 
\cite{MERCA-SCHMIDT1,MERCA-SCHMIDT2}. 
Namely, we prove the key results in Theorem \ref{theorem1_GenFormula_for_snk} and 
Theorem \ref{theorem2_AnotherGenFactThm} and their corollaries 
which provide analogous factorization theorems for the generalized cases of the 
Lambert series defined in the first section of the introduction above. 
As in the similar and closely-related factorization results found in the references, 
each of these factorization theorems provide new relations between sums over an 
arbitrary function, $a_n$, involving the divisors of $n$ and more additive identities 
involving partition functions and the same function $a_n$. 
Thus the results proved in this article continue the spirit of 
\cite{MERCA-SCHMIDT1,MERCA-SCHMIDT2,MERCA-LSFACTTHM,SCHMIDT-LSFACTTHM} by 
connecting the at times seemingly disparate branches of additive and 
multiplicative number theory in new and interesting ways. 

Central to the importance of our new results are the applications to modified 
divisors sums for many classical special arithmetic functions often studied in additive and
multiplicative number theory. These special functions include the M\"obius function $\mu(n)$, 
Euler's totient function $\phi(n)$, the sum-of-divisors functions $\sigma_{\alpha}(n)$, 
Liouville's function $\lambda(n)$, von Mangoldt's function $\Lambda(n)$, $|\mu(n)|$, 
the number of distinct primes dividing $n$, $\omega(n)$, Jordan's totient function 
$J_t(n)$, and the sum of squares function $r_2(n)$. 
In particular, we provide a number of examples of our new results pertaining to 
these classical special functions in Section \ref{Section_GenFactThms_and_Examples} and 
Section \ref{Section_Variants_and_Examples}.  
Also discussed in this article are conjectures on the expansions of degenerate cases of 
Theorem \ref{theorem1_GenFormula_for_snk} given in terms of nested formulas 
involving Euler's partition function $p(n)$ (see Remark \ref{remark_conj_degen_cases_of_thm1}). 
These conjectured identities are interesting in form for their 
own sake, and fit in with the properties we establish for the generalized Lambert series 
cases in \eqref{eqn_GenLambertSeries_LaAlphaBetaq_def}. 

Finally, we conclude the article by stating and proving another somewhat more general 
class of factorization theorems for Lambert series over a convolution of arithmetic 
functions, $f \ast g$. These uniquely new results lead to still more applications 
connecting partition functions and special functions from multiplicative number theory. 
In particular, in Section \ref{Section_LSFactThms_CvlOfTwoFns_and_Apps} of the article 
we prove Proposition \ref{prop_OnePossibleLSFact} and 
Theorem \ref{claim_snk_inverses} 
providing the more difficult to obtain corresponding forms of the inverse sequences 
specifying these factorizations. 
Furthermore, the remarks given in Section \ref{subSection_Concl_OtherFactThmExps} 
offer several concluding suggestions on enumerating new, and more general 
factorization theorems for the expansions of Lambert series generating functions 
which we have so far not considered in this article or in the references. 
Our continued aim in exploring these new variations of the Lambert series 
factorization theorems is to branch out and provide further connections 
between the inherently multiplicative structure of the Lambert series 
generating functions and the additive theory of partitions and special 
partition functions. 

\section{Generalized Factorization Theorems} 
\label{Section_GenFactThms_and_Examples}

\subsection{Special Cases} 
\label{subSection_GenFactThmV1_SpCases_and_Examples} 

Before we state and prove the generalized factorization theorem results in the following  
subsections, we first consider the special case series expansions identified below. 
These next results given in this subsection identify several new interpretations of the 
factor pair sequence $s_{n,k}$ as well as provide generalized analogs to the 
expansions of several identities cited in \cite{MERCA-SCHMIDT2}. 

\begin{prop}
     \label{prop_first_spcase_result} 
	For $|q|<1$, we have that 
	\begin{align*}
	\sum_{n=1}^{\infty} a_n \frac{q^{2n-1}}{1-q^{2n-1}} = \frac{1}{(q;q^2)_\infty} 
	\sum_{n=1}^{\infty} \left( \sum_{k=1}^n s_{n,k} a_k\right) (-1)^{n-1} q^n,
	\end{align*}
	where $s_{n,k}$ denotes the number of $(2k-1)$'s in all partitions of $n$ 
	into distinct odd parts.
\end{prop}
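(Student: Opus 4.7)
The plan is to clear the denominator on the right by multiplying both sides by the Euler product $(q;q^2)_\infty = \prod_{m\geq 1}(1-q^{2m-1})$, and then read off $s_{n,k}$ by a direct combinatorial identification of the resulting coefficients.

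First I would rewrite the left-hand side as
\begin{align*}
(q;q^2)_\infty \sum_{n\geq 1} a_n \frac{q^{2n-1}}{1-q^{2n-1}} = \sum_{n\geq 1} a_n \, q^{2n-1} \prod_{m\neq n}(1-q^{2m-1}),
\end{align*}
using the fact that the factor $1-q^{2n-1}$ in the Euler product cancels with the denominator. The inner product $q^{2n-1}\prod_{m\neq n}(1-q^{2m-1})$ is, by expanding each binomial, the signed generating function for partitions $\lambda$ into distinct odd parts that contain $2n-1$ as a part, with weight $(-1)^{\ell(\lambda)-1}$, where $\ell(\lambda)$ is the number of parts.

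Next I would collect by the total weight $|\lambda|=N$ and match coefficients of $q^N$ against the right-hand side. This reduces the claim to showing
\begin{align*}
s_{N,k} = (-1)^{N-1}\sum_{\substack{\lambda \vdash N \\ \text{distinct odd parts} \\ 2k-1 \in \lambda}} (-1)^{\ell(\lambda)-1}
= \sum_{\substack{\lambda \vdash N \\ \text{distinct odd parts} \\ 2k-1 \in \lambda}} (-1)^{N-\ell(\lambda)}.
\end{align*}
The key observation that makes the signs disappear is a parity argument: since every part of $\lambda$ is odd, we have $N \equiv \ell(\lambda) \pmod 2$, so $(-1)^{N-\ell(\lambda)}=1$ for every such $\lambda$. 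Hence the signed sum collapses to the unsigned count of partitions of $N$ into distinct odd parts containing $2k-1$, and because the parts are distinct this equals the total number of appearances of $2k-1$ across all such partitions of $N$, which is precisely the combinatorial description of $s_{N,k}$ asserted in the proposition.

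I expect the only real step to watch is the bookkeeping in passing from the product expansion $\prod_{m\neq n}(1-q^{2m-1})$ to the sum over signed partitions $\lambda$ containing $2n-1$, and the swap of the order of summation (over $n\geq 1$ versus over $\lambda$) that consolidates the answer into a single sum in $N$; none of this is delicate, but it is where a careless indexing error could creep in. The parity observation, which is the cleanest part of the argument, is what ultimately removes all signs and turns the identity into a purely combinatorial statement about partitions into distinct odd parts.
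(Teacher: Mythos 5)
Your proof is correct and follows essentially the same route as the paper's: the paper invokes the finite identity from Merca's factorization theorem paper with $x_k$ replaced by $q^{2k-1}$ and lets $n\to\infty$, which is precisely your clearing of the denominator against $(q;q^2)_\infty$ and expansion into signed partitions into distinct odd parts containing $2n-1$. Your explicit parity observation ($N\equiv\ell(\lambda)\pmod 2$ because every part is odd), which reconciles the sign $(-1)^{\ell(\lambda)-1}$ from the subset expansion with the factor $(-1)^{n-1}$ in the statement, is exactly the step the paper leaves implicit in ``the result follows directly.''
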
 
\begin{proof}
     We consider the identity \cite[eq. 2.1]{MERCA-LSFACTTHM}, namely
	\begin{align*}
	\sum_{k=1}^n \frac{a_k x_k}{1-x_k} = \left( \prod_{k=1}^{n} \frac{1}{1-x_k} \right) \left( 
	\sum_{k=1}^{n} \sum_{1\leq i_1 < \ldots <i_k\leq n} (-1)^{k-1} 
	(a_{i_1}+\cdots +a_{i_k}) x_{i_1}\cdots x_{i_k}\right) .
	\end{align*}
	By this relation with $x_k$ replaced by $q^{2k-1}$, we get
	\begin{align*}
     \sum_{k=1}^n & \frac{a_k q^{2k-1}}{1-q^{2k-1}} \\ 
     & = \frac{1}{(q;q^2)_n}  \left(  \sum_{k=1}^{n} \sum_{1\leq i_1 < \ldots <i_k\leq n} (-1)^{k-1} 
     (a_{i_1}+\cdots +a_{i_k}) q^{(2i_1-1)+\cdots+(2i_k-1)}\right) .
\end{align*}	
   The result follows directly from this identity considering the 
   limiting case as $n\to\infty$.
\end{proof}

\begin{example}[Consequences of the Proposition] 
The result in Proposition \ref{prop_first_spcase_result} 
allows us to derive many special case identities involving Euler's partition function and various arithmetic functions. 
More precisely, by the well-known famous special cases Lambert series identities 
expanded in the introduction to \cite{MERCA-SCHMIDT1}, for $n \geq 1$ we have that 
	\begin{align*}
	& \sum_{k=1}^{n} \sum_{2d-1|k} d^x \cdot \widetilde{q}(n-k) = \sum_{k=1}^n (-1)^{n-1} k^x s_{n,k},\\
	& \sum_{k=1}^{n} \sum_{2d-1|k} \mu(d) \cdot \widetilde{q}(n-k) = \sum_{k=1}^{n} (-1)^{n-1} \mu(k) s_{n,k}, \\
	& \sum_{k=1}^{n} \sum_{2d-1|k} \phi(d) \cdot \widetilde{q}(n-k) = \sum_{k=1}^n (-1)^{n-1} \phi(k) s_{n,k},\\
	& \sum_{k=1}^{n} \sum_{2d-1|k} \lambda(d) \cdot \widetilde{q}(n-k) = \sum_{k=1}^n (-1)^{n-1} \lambda(k) s_{n,k},\\
	& \sum_{k=1}^n \sum_{2d-1|k} \log(d) \cdot \widetilde{q}(n-k) = \sum_{k=1}^n (-1)^{n-1} \log(k) s_{n,k},\\
	& \sum_{k=1}^n \sum_{2d-1|k} |\mu(d)| \cdot \widetilde{q}(n-k) = \sum_{k=1}^n (-1)^{n-1} | \mu(k) | s_{n,k},\\
	& \sum_{k=1}^n \sum_{2d-1|k} J_t(d) \cdot \widetilde{q}(n-k) = \sum_{k=1}^n (-1)^{n-1} J_t(k) s_{n,k},
	\end{align*}
	where $s_{n,k}$ is the number of $(2k-1)$'s in all partitions of $n$ 
	into distinct odd parts and 
	$\widetilde{q}(n) := s_e(n) - s_o(n)$ where $s_e(n)$ and $s_o(n)$ respectively 
	denote the numbers of partitions of $n$ into even (odd) parts. 
	We can also similarly express the relations in the previous equations for any 
     special arithmetic function $a_n$ in the form of 
     \begin{align*} 
     \sum_{2d-1|n} a_d & = \sum_{k=0}^n \sum_{j=1}^k (-1)^{k-1} q(n-k) s_{k, j} a_j, 
     \end{align*} 
     where the partition function $q(n)$ denotes the number partitions of $n$ into (distinct) odd parts. 
	Moreover, since we have a direct factorization of the Lambert series generating function 
	for the sum of squares function in the form of the proposition, we may write 
	\begin{align*} 
	\sum_{k=1}^n r_2(k) \widetilde{q}(n-k) & = \sum_{k=1}^n 4 \cdot (-1)^{k+1} s_{n,k}, 
	\end{align*} 
	using the same notation as above. Similarly, we expand $r_2(n)$ as the multiple sum 
	\[
	r_2(n) = \sum_{k=0}^n \sum_{j=1}^k 4 \cdot q(n-k) (-1)^{j+1} s_{k,j}, 
	\]
	for all $n \geq 1$. 
\end{example} 

\begin{prop}
	For $|q|<1$, $0\leq \beta<\alpha$,
	\begin{align*}
	\sum_{n=1}^{\infty} a_n \frac{q^{\alpha n-\beta}}{1-q^{\alpha n-\beta}} = \frac{1}{(q^{\alpha-\beta};q^\alpha)_\infty} \sum_{n=1}^{\infty} \left( \sum_{k=1}^n (s_o(n,k)-s_e(n,k)) a_k\right)  q^n,
	\end{align*}
	where $s_o(n,k)$ and $s_e(n,k)$ denotes the number of $(\alpha k-\beta)$'s in all partition of $n$ into odd (even) number of distinct parts of the form $\alpha k-\beta$.
\end{prop}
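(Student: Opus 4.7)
My plan is to adapt the proof of Proposition~\ref{prop_first_spcase_result} almost verbatim. Starting from the finite identity \cite[eq. 2.1]{MERCA-LSFACTTHM}
$$\sum_{k=1}^n \frac{a_k x_k}{1-x_k} = \left(\prod_{k=1}^n \frac{1}{1-x_k}\right) \left(\sum_{k=1}^n \sum_{1 \leq i_1 < \cdots < i_k \leq n} (-1)^{k-1}(a_{i_1}+\cdots+a_{i_k}) x_{i_1} \cdots x_{i_k}\right),$$
I would substitute $x_k \mapsto q^{\alpha k - \beta}$ and let $n \to \infty$. The left-hand side converges to $L_a(\alpha,\beta; q)$ as defined in \eqref{eqn_GenLambertSeries_LaAlphaBetaq_def}, while the product tends to $1/(q^{\alpha - \beta}; q^\alpha)_\infty$ by the standard definition of the $q$-Pochhammer symbol, producing the prefactor claimed in the proposition.

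It remains to identify the coefficient of $q^n$ in the remaining sum on the right-hand side. After interchanging the order of summation to group contributions to $a_k q^n$, the sums over $k$-element increasing tuples $1 \leq i_1 < \cdots < i_k$ are naturally reindexed as sums over finite subsets $S \subset \mathbb{Z}_{\geq 1}$ of distinct positive integers. The coefficient of $a_k q^n$ thus becomes
$$\sum_{\substack{S \ni k \\ \sum_{i \in S}(\alpha i - \beta) = n}} (-1)^{|S|-1}.$$

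I would then match this alternating count to $s_o(n,k) - s_e(n,k)$ directly from the combinatorial definitions in the statement: each admissible subset $S$ corresponds bijectively to a partition of $n$ into distinct parts of the form $\alpha j - \beta$ that contains $\alpha k - \beta$ as a part (containment is $0$ or $1$ since the parts are distinct). Partitions with $|S|$ odd contribute sign $+1$ and are enumerated by $s_o(n,k)$; those with $|S|$ even contribute $-1$ and are enumerated by $s_e(n,k)$. Collecting terms gives exactly $s_o(n,k) - s_e(n,k)$, as required.

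The main obstacle is essentially just bookkeeping. One should verify that the $n \to \infty$ limit can be taken termwise on both sides, which is immediate for $|q|<1$ from the absolute convergence of the Lambert series and the convergence of the $q$-Pochhammer infinite product in that range. As a sanity check, when $(\alpha, \beta) = (2, 1)$ every admissible part $\alpha j - \beta$ is odd, so a partition of $n$ into $\ell$ such distinct parts satisfies $n \equiv \ell \pmod 2$; hence $(-1)^{|S|-1} = (-1)^{n-1}$ and the general formula collapses to the one in Proposition~\ref{prop_first_spcase_result}, confirming consistency with the special case previously handled.
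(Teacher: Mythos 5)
Your proposal is correct and follows exactly the paper's own route: the paper's proof is the one-line instruction to substitute $x_k \mapsto q^{\alpha k-\beta}$ in \cite[eq.\ 2.1]{MERCA-LSFACTTHM} and pass to the limit, which is precisely what you carry out (with the combinatorial identification of the alternating subset sum as $s_o(n,k)-s_e(n,k)$ made explicit). Your consistency check against Proposition \ref{prop_first_spcase_result} for $(\alpha,\beta)=(2,1)$ is a nice addition but not part of the paper's argument.
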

\begin{proof}
	The proof follows from \cite[eq. 2.1]{MERCA-LSFACTTHM}, replacing $x_k$ by $q^{\alpha k-\beta}$.
\end{proof}

\begin{prop}
	For $|q|<1$, $0\leq \beta<\alpha$,
	\begin{align*}
	\sum_{n=1}^{\infty} a_n \frac{q^{\alpha n-\beta}}{1-q^{\alpha n-\beta}} = (q^{\alpha-\beta};q^\alpha)_\infty \sum_{n=1}^{\infty} \left( \sum_{k=1}^n s(n,k) a_k\right)  q^n,
	\end{align*}
	where $s(n,k)$ denotes the number of $(\alpha k-\beta)$'s in all partition of $n$ into parts of the form $\alpha k-\beta$.
\end{prop}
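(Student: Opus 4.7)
The plan is to prove this identity by recognizing both sides as generating functions counting occurrences of a distinguished part size in partitions. The key lemma is the single-variable generating function identity
\begin{equation*}
\sum_{n \geq 1} s(n,k)\, q^n = \frac{1}{(q^{\alpha-\beta};q^\alpha)_\infty} \cdot \frac{q^{\alpha k - \beta}}{1 - q^{\alpha k - \beta}},
\end{equation*}
valid for each fixed $k \geq 1$. This has a standard combinatorial proof: the right-hand side enumerates pairs consisting of an arbitrary partition with parts of the form $\alpha j - \beta$ (contributed by the factor $1/(q^{\alpha-\beta};q^\alpha)_\infty$) together with a non-empty stack of copies of the single part $\alpha k - \beta$ (contributed by $q^{\alpha k - \beta}/(1-q^{\alpha k - \beta})$); merging the stack into the partition and marking one of its copies of $\alpha k - \beta$ produces exactly the configurations counted by $s(n,k)$. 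Equivalently, the identity arises by differentiating the refined product $\prod_{j \geq 1}(1 - u_j q^{\alpha j - \beta})^{-1}$ with respect to $u_k$ and then specializing $u_j = 1$ for all $j$.

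With this lemma in hand, rearranging gives
\begin{equation*}
\frac{q^{\alpha k - \beta}}{1 - q^{\alpha k - \beta}} = (q^{\alpha-\beta};q^\alpha)_\infty \sum_{n \geq 1} s(n,k)\, q^n.
\end{equation*}
Multiplying both sides by $a_k$, summing over $k \geq 1$, and interchanging the order of summation---which is legitimate because $s(n,k) = 0$ whenever $\alpha k - \beta > n$, so for each fixed $n$ only finitely many $k$ contribute---produces
\begin{equation*}
\sum_{k=1}^\infty a_k \frac{q^{\alpha k - \beta}}{1 - q^{\alpha k - \beta}} = (q^{\alpha-\beta};q^\alpha)_\infty \sum_{n=1}^\infty \left(\sum_{k=1}^n s(n,k)\, a_k\right) q^n,
\end{equation*}
which is precisely the claimed identity.

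The argument is essentially routine, and in contrast to the two preceding propositions, which invoke \cite[eq.~2.1]{MERCA-LSFACTTHM} together with a signed inclusion-exclusion over subsets, this statement admits the direct derivation above because no cancellation is required: the relevant generating function simply factors multiplicatively. The only step warranting any real care is the justification of the summation interchange, which holds either as a formal power series identity in $q$ or, analytically for $|q^{\alpha}|<1$, via absolute convergence from the defining Lambert series. Thus the main (and only modest) obstacle is to write down the lemma cleanly; everything else is formal rearrangement.
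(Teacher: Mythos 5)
Your proposal is correct and rests on exactly the same key fact as the paper's proof, namely that $\frac{q^{\alpha k-\beta}}{1-q^{\alpha k-\beta}}\cdot\frac{1}{(q^{\alpha-\beta};q^{\alpha})_{\infty}}$ generates the number of $(\alpha k-\beta)$'s in all partitions of $n$ into parts of the form $\alpha j-\beta$; the paper simply asserts this generating function identity and concludes, whereas you additionally supply its standard combinatorial justification and spell out the interchange of summation. Same approach, just more fully written out.
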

\begin{proof}
		We take into account the fact that
	$$\frac{q^{\alpha n-\beta}}{1-q^{\alpha n-\beta}} \cdot \frac{1}{(q^{\alpha-\beta};q^\alpha)_\infty}$$
	is the generating function for the number of $(\alpha k-\beta)$'s in all partitions of $n$ into parts of the form $\alpha k-\beta$.
	This generating function implies our result. 
\end{proof}

\subsection{The first generalized factorization theorem} 

\begin{theorem}[A General Formula for $s_{n,k}$] 
\label{theorem1_GenFormula_for_snk} 
For fixed $\alpha, \beta, \gamma, \delta \in \mathbb{Z}$ such that 
$\alpha, \gamma \geq 1$, $1 \leq \beta < \alpha$, and $1 \leq \delta < \gamma$, the 
factorization pair $(C(q), s_{n,k})$ in the generalized Lambert series factorization expanded by 
\begin{align*} 
\tag{i}
L_a(\alpha, \beta, \gamma, \delta; q) & := 
     \sum_{n \geq 1} \frac{a_n q^{\alpha n + \beta}}{1-q^{\gamma n+\delta}} = 
     \frac{1}{C(q)} \sum_{n \geq 1} \sum_{k=1}^n s_{n,k} a_k \cdot q^n, 
\end{align*} 
satisfies 
\begin{align*} 
s_{n,k} & = [q^n] \frac{q^{\alpha n + \beta}}{1-q^{\gamma n+\delta}} C(q). 
\end{align*} 
Specific interpretations of the sequence $s_{n,k}$ are given as special cases of the 
previous expansions as in the results proved in the last subsection. 
\end{theorem}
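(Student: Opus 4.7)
The plan is to derive the formula by a direct coefficient extraction, treating the sequence $\{a_n\}_{n \geq 1}$ as consisting of independent indeterminates. First, I would multiply both sides of the defining identity (i) by $C(q)$ to obtain
\begin{align*}
C(q) \sum_{k \geq 1} \frac{a_k q^{\alpha k + \beta}}{1 - q^{\gamma k + \delta}} = \sum_{n \geq 1} \sum_{k=1}^n s_{n,k} a_k q^n.
\end{align*}

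Next, I would interchange the multiplication-by-$C(q)$ with the outer summation on the left-hand side. This is legitimate in the formal power-series ring $\mathbb{Z}[[q]]$ because the minimum $q$-degree $\alpha k + \beta$ of the $k$th summand tends to infinity as $k \to \infty$, so only finitely many terms contribute to any fixed coefficient. This rewrites the left side as
\begin{align*}
\sum_{k \geq 1} a_k \cdot C(q) \cdot \frac{q^{\alpha k + \beta}}{1 - q^{\gamma k + \delta}}.
\end{align*}

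Then, viewing both sides as power series in $q$ whose coefficients are $\mathbb{Z}$-linear forms in the indeterminates $a_k$, I would equate the coefficient of each $a_k$ on the two sides. Adopting the convention $s_{n,k} = 0$ for $n < k$, the coefficient of $a_k$ on the right collapses to $\sum_{n \geq k} s_{n,k} q^n$, producing the generating-function identity
\begin{align*}
\sum_{n \geq 1} s_{n,k} q^n = C(q) \cdot \frac{q^{\alpha k + \beta}}{1 - q^{\gamma k + \delta}},
\end{align*}
and extracting $[q^n]$ on both sides yields the stated formula. (Here the exponents inside the numerator and denominator of the right-hand side are naturally written with the second index $k$ rather than $n$, since $s_{n,k}$ genuinely depends on both indices; this is consistent with the generating-function interpretations of $s_{n,k}$ recorded in the three special-case propositions that precede the theorem.) The only obstacle is the interchange of summation with the product against $C(q)$, which is routine in the $q$-adic topology on $\mathbb{Z}[[q]]$; the entire argument hinges on the arbitrariness of $\{a_n\}$, which is what licenses the extraction of $a_k$-coefficients in the third step.
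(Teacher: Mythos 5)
Your proposal is correct and is essentially the paper's own argument: multiply (i) by $C(q)$, equate the coefficients of each $a_k$ (licensed by the arbitrariness of the sequence), and read off $\sum_{n\geq 1} s_{n,k} q^n = C(q)\, q^{\alpha k+\beta}/(1-q^{\gamma k+\delta})$. Your added remarks — justifying the interchange in the formal power-series topology and noting that the exponents in the displayed formula for $s_{n,k}$ should carry the index $k$ rather than $n$ — are both accurate, the latter correctly identifying a typo in the theorem statement that the paper's proof implicitly confirms.
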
 
\begin{proof} 
We begin by rewriting (i) in the form of 
\begin{align*} 
C(q) \sum_{k \geq 1} \frac{a_k q^{\alpha k+\beta}}{1-q^{\gamma k+\delta}} & = 
     \sum_{k \geq 1} \left(\sum_{n \geq 1} s_{n,k} q^n\right) a_k. 
\end{align*} 
Then if we equate the coefficients of $a_k$ in the previous equation, we see that 
\begin{align*} 
C(q) \frac{q^{\alpha k+\beta}}{1-q^{\gamma k+\delta}} & = \sum_{n \geq 1} s_{n,k} q^n, 
\end{align*} 
which implies our stated result. 
\end{proof} 

\begin{figure}[ht!]

\linespread{1}
\begin{minipage}{\linewidth} 
\begin{center} 
\tiny
\begin{equation*} 
\boxed{ 
\begin{array}{cccccccccccccccc}
 1 & 0 & 0 & 0 & 0 & 0 & 0 & 0 & 0 & 0 & 0 & 0 & 0 & 0 & 0 & 0 \\
 -1 & 1 & 0 & 0 & 0 & 0 & 0 & 0 & 0 & 0 & 0 & 0 & 0 & 0 & 0 & 0 \\
 -1 & -1 & 1 & 0 & 0 & 0 & 0 & 0 & 0 & 0 & 0 & 0 & 0 & 0 & 0 & 0 \\
 1 & -1 & -1 & 1 & 0 & 0 & 0 & 0 & 0 & 0 & 0 & 0 & 0 & 0 & 0 & 0 \\
 -1 & 0 & -1 & -1 & 1 & 0 & 0 & 0 & 0 & 0 & 0 & 0 & 0 & 0 & 0 & 0 \\
 0 & 0 & 0 & -1 & -1 & 1 & 0 & 0 & 0 & 0 & 0 & 0 & 0 & 0 & 0 & 0 \\
 1 & 2 & 0 & 0 & -1 & -1 & 1 & 0 & 0 & 0 & 0 & 0 & 0 & 0 & 0 & 0 \\
 0 & -1 & 1 & 0 & 0 & -1 & -1 & 1 & 0 & 0 & 0 & 0 & 0 & 0 & 0 & 0 \\
 0 & 0 & 0 & 1 & 0 & 0 & -1 & -1 & 1 & 0 & 0 & 0 & 0 & 0 & 0 & 0 \\
 1 & 0 & 2 & 0 & 1 & 0 & 0 & -1 & -1 & 1 & 0 & 0 & 0 & 0 & 0 & 0 \\
 0 & 0 & -1 & 1 & 0 & 1 & 0 & 0 & -1 & -1 & 1 & 0 & 0 & 0 & 0 & 0 \\
 0 & 2 & -1 & 0 & 1 & 0 & 1 & 0 & 0 & -1 & -1 & 1 & 0 & 0 & 0 & 0 \\
 0 & -1 & 0 & 1 & 0 & 1 & 0 & 1 & 0 & 0 & -1 & -1 & 1 & 0 & 0 & 0 \\
 0 & -1 & 0 & -1 & 0 & 0 & 1 & 0 & 1 & 0 & 0 & -1 & -1 & 1 & 0 & 0 \\
 0 & 0 & 0 & -1 & 0 & 0 & 0 & 1 & 0 & 1 & 0 & 0 & -1 & -1 & 1 & 0 \\
 -1 & 0 & 0 & -1 & 1 & 0 & 0 & 0 & 1 & 0 & 1 & 0 & 0 & -1 & -1 & 1 \\
\end{array}
}
\end{equation*}
\end{center} 
\subcaption*{(i) $s_{n,k}$} 
\end{minipage} 

\begin{minipage}{\linewidth} 
\begin{center} 
\tiny 
\begin{equation*} 
\boxed{ 
\begin{array}{cccccccccccccccc}
 1 & 0 & 0 & 0 & 0 & 0 & 0 & 0 & 0 & 0 & 0 & 0 & 0 & 0 & 0 & 0 \\
 1 & 1 & 0 & 0 & 0 & 0 & 0 & 0 & 0 & 0 & 0 & 0 & 0 & 0 & 0 & 0 \\
 2 & 1 & 1 & 0 & 0 & 0 & 0 & 0 & 0 & 0 & 0 & 0 & 0 & 0 & 0 & 0 \\
 2 & 2 & 1 & 1 & 0 & 0 & 0 & 0 & 0 & 0 & 0 & 0 & 0 & 0 & 0 & 0 \\
 5 & 3 & 2 & 1 & 1 & 0 & 0 & 0 & 0 & 0 & 0 & 0 & 0 & 0 & 0 & 0 \\
 7 & 5 & 3 & 2 & 1 & 1 & 0 & 0 & 0 & 0 & 0 & 0 & 0 & 0 & 0 & 0 \\
 9 & 6 & 5 & 3 & 2 & 1 & 1 & 0 & 0 & 0 & 0 & 0 & 0 & 0 & 0 & 0 \\
 15 & 11 & 7 & 5 & 3 & 2 & 1 & 1 & 0 & 0 & 0 & 0 & 0 & 0 & 0 & 0 \\
 22 & 15 & 11 & 7 & 5 & 3 & 2 & 1 & 1 & 0 & 0 & 0 & 0 & 0 & 0 & 0 \\
 27 & 21 & 14 & 11 & 7 & 5 & 3 & 2 & 1 & 1 & 0 & 0 & 0 & 0 & 0 & 0 \\
 42 & 30 & 22 & 15 & 11 & 7 & 5 & 3 & 2 & 1 & 1 & 0 & 0 & 0 & 0 & 0 \\
 55 & 41 & 30 & 22 & 15 & 11 & 7 & 5 & 3 & 2 & 1 & 1 & 0 & 0 & 0 & 0 \\
 74 & 54 & 41 & 29 & 22 & 15 & 11 & 7 & 5 & 3 & 2 & 1 & 1 & 0 & 0 & 0 \\
 101 & 77 & 56 & 42 & 30 & 22 & 15 & 11 & 7 & 5 & 3 & 2 & 1 & 1 & 0 & 0 \\
 135 & 101 & 77 & 56 & 42 & 30 & 22 & 15 & 11 & 7 & 5 & 3 & 2 & 1 & 1 & 0 \\
 170 & 132 & 99 & 76 & 55 & 42 & 30 & 22 & 15 & 11 & 7 & 5 & 3 & 2 & 1 & 1 \\
\end{array}
} 
\end{equation*} 
\end{center} 
\subcaption*{(ii) $s_{n,k}^{(-1)}$} 
\end{minipage}

\begin{minipage}{\linewidth} 
\begin{center} 
\tiny 
\begin{equation*} 
\boxed{ 
\begin{array}{cccccccccccccccc}
 1 & 0 & 0 & 0 & 0 & 0 & 0 & 0 & 0 & 0 & 0 & 0 & 0 & 0 & 0 & 0 \\
 0 & 1 & 0 & 0 & 0 & 0 & 0 & 0 & 0 & 0 & 0 & 0 & 0 & 0 & 0 & 0 \\
 0 & 1 & 1 & 0 & 0 & 0 & 0 & 0 & 0 & 0 & 0 & 0 & 0 & 0 & 0 & 0 \\
 -1 & 1 & 1 & 1 & 0 & 0 & 0 & 0 & 0 & 0 & 0 & 0 & 0 & 0 & 0 & 0 \\
 0 & 3 & 2 & 1 & 1 & 0 & 0 & 0 & 0 & 0 & 0 & 0 & 0 & 0 & 0 & 0 \\
 0 & 2 & 2 & 2 & 1 & 1 & 0 & 0 & 0 & 0 & 0 & 0 & 0 & 0 & 0 & 0 \\
 -2 & 6 & 5 & 3 & 2 & 1 & 1 & 0 & 0 & 0 & 0 & 0 & 0 & 0 & 0 & 0 \\
 1 & 7 & 6 & 4 & 3 & 2 & 1 & 1 & 0 & 0 & 0 & 0 & 0 & 0 & 0 & 0 \\
 0 & 13 & 9 & 7 & 5 & 3 & 2 & 1 & 1 & 0 & 0 & 0 & 0 & 0 & 0 & 0 \\
 -3 & 13 & 12 & 10 & 6 & 5 & 3 & 2 & 1 & 1 & 0 & 0 & 0 & 0 & 0 & 0 \\
 0 & 30 & 22 & 15 & 11 & 7 & 5 & 3 & 2 & 1 & 1 & 0 & 0 & 0 & 0 & 0 \\
 1 & 27 & 23 & 18 & 14 & 10 & 7 & 5 & 3 & 2 & 1 & 1 & 0 & 0 & 0 & 0 \\
 -3 & 54 & 41 & 29 & 22 & 15 & 11 & 7 & 5 & 3 & 2 & 1 & 1 & 0 & 0 & 0 \\
 2 & 60 & 51 & 39 & 28 & 21 & 14 & 11 & 7 & 5 & 3 & 2 & 1 & 1 & 0 & 0 \\
 0 & 90 & 68 & 54 & 40 & 30 & 22 & 15 & 11 & 7 & 5 & 3 & 2 & 1 & 1 & 0 \\
 -4 & 107 & 90 & 69 & 52 & 40 & 29 & 21 & 15 & 11 & 7 & 5 & 3 & 2 & 1 & 1 \\
\end{array}
} 
\end{equation*} 
\end{center} 
\subcaption*{(iii) $\gamma_k(n)$} 
\end{minipage}
\linespread{2}

\caption{A generalized factorization for $L_a(1, 0, 2, 1; q)$} 
\label{figure_spcase_nonsg_matrix_fact_params} 

\end{figure} 

\begin{figure}[ht!]

\linespread{1}
\begin{minipage}{\linewidth} 
\begin{center} 
\tiny
\begin{equation*} 
\boxed{ 
\begin{array}{cccccccccc}
 1 & 0 & 0 & 0 & 0 & 0 & 0 & 0 & 0 & 0 \\
 -1 & 1 & 0 & 0 & 0 & 0 & 0 & 0 & 0 & 0 \\
 -1 & -1 & 1 & 0 & 0 & 0 & 0 & 0 & 0 & 0 \\
 d & -1 & -1 & 1 & 0 & 0 & 0 & 0 & 0 & 0 \\
 -d & 0 & -1 & -1 & 1 & 0 & 0 & 0 & 0 & 0 \\
 1-d & 0 & 0 & -1 & -1 & 1 & 0 & 0 & 0 & 0 \\
 d^2 & d+1 & 0 & 0 & -1 & -1 & 1 & 0 & 0 & 0 \\
 1-d^2 & -d & 1 & 0 & 0 & -1 & -1 & 1 & 0 & 0 \\
 d-d^2 & 1-d & 0 & 1 & 0 & 0 & -1 & -1 & 1 & 0 \\
 d^3 & 0 & d+1 & 0 & 1 & 0 & 0 & -1 & -1 & 1 \\
\end{array}
}
\end{equation*}
\end{center} 
\subcaption*{(i) $s_{n,k}$} 
\end{minipage} 

\begin{minipage}{\linewidth} 
\begin{center} 
\tiny 
\begin{equation*} 
\boxed{ 
\begin{array}{cccccccccc}
 1 & 0 & 0 & 0 & 0 & 0 & 0 & 0 & 0 & 0 \\
 1 & 1 & 0 & 0 & 0 & 0 & 0 & 0 & 0 & 0 \\
 2 & 1 & 1 & 0 & 0 & 0 & 0 & 0 & 0 & 0 \\
 3-d & 2 & 1 & 1 & 0 & 0 & 0 & 0 & 0 & 0 \\
 5 & 3 & 2 & 1 & 1 & 0 & 0 & 0 & 0 & 0 \\
 7 & 5 & 3 & 2 & 1 & 1 & 0 & 0 & 0 & 0 \\
 -d^2-d+11 & 7-d & 5 & 3 & 2 & 1 & 1 & 0 & 0 & 0 \\
 15 & 11 & 7 & 5 & 3 & 2 & 1 & 1 & 0 & 0 \\
 22 & 15 & 11 & 7 & 5 & 3 & 2 & 1 & 1 & 0 \\
 -d^3-2 d+30 & 22-d & 15-d & 11 & 7 & 5 & 3 & 2 & 1 & 1 \\
\end{array}
} 
\end{equation*} 
\end{center} 
\subcaption*{(ii) $s_{n,k}^{(-1)}$} 
\end{minipage}

\begin{minipage}{\linewidth} 
\begin{center} 
\tiny 
\begin{equation*} 
\boxed{ 
\begin{array}{cccccccccc}
 1 & 0 & 0 & 0 & 0 & 0 & 0 & 0 & 0 & 0 \\
 0 & 1 & 0 & 0 & 0 & 0 & 0 & 0 & 0 & 0 \\
 0 & 1 & 1 & 0 & 0 & 0 & 0 & 0 & 0 & 0 \\
 -d & 1 & 1 & 1 & 0 & 0 & 0 & 0 & 0 & 0 \\
 0 & 3 & 2 & 1 & 1 & 0 & 0 & 0 & 0 & 0 \\
 0 & 2 & 2 & 2 & 1 & 1 & 0 & 0 & 0 & 0 \\
 -d^2-d & 7-d & 5 & 3 & 2 & 1 & 1 & 0 & 0 & 0 \\
 d & 7 & 6 & 4 & 3 & 2 & 1 & 1 & 0 & 0 \\
 0 & 13 & 9 & 7 & 5 & 3 & 2 & 1 & 1 & 0 \\
 -d^3-2 d & 14-d & 13-d & 10 & 6 & 5 & 3 & 2 & 1 & 1 \\
\end{array}
} 
\end{equation*} 
\end{center} 
\subcaption*{(iii) $\gamma_k(n)$} 
\end{minipage}
\linespread{2}

\caption{A generalized factorization for $L_a(1, 0, 2, 1; d, q)$} 
\label{figure_spcase_nonsg_matrix_fact_paramsd_v2} 

\end{figure} 

\begin{cor} 
\label{cor_ConsequenceOfThm1_ConnectionBetweenOrdFactThms} 
Let $\alpha \geq 1$ and $0 \leq \beta < \alpha$ be integers and suppose that $\delta \in \mathbb{Z}$. 
Suppose that 
\begin{align*} 
\sum_{n \geq 1} \frac{a_n q^{n}}{1-q^{n}} = 
     \frac{1}{C(q)} \sum_{n \geq 0} \sum_{k=1}^n s_{n,k} a_k \cdot q^n. 
\end{align*} 
and that 
\begin{align*} 
\sum_{n \geq 1} \frac{a_n q^{\alpha n-\beta+\delta}}{1-q^{\alpha n-\beta}} = 
     \frac{1}{C(q)} \sum_{n \geq 0} \sum_{k=1}^n s_{n,k}(\alpha, \beta; \delta) a_k \cdot q^n. 
\end{align*} 
Then we have that 
\begin{align*} 
s_{n,k}(\alpha, \beta; \delta) & = s_{n-\delta,\alpha k-\beta}. 
\end{align*} 
In particular, when $C(q) \equiv (q; q)_{\infty}$ we have that 
the result in the equation above holds 
for $s_{n,k} = s_o(n, k) - s_e(n, k)$ where $s_o(n, k)$ and $s_e(n, k)$ 
are respectively the number of $k$'s in all 
partitions of $n$ into an odd (even) number of distinct parts. 
\end{cor}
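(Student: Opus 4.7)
The plan is to apply Theorem \ref{theorem1_GenFormula_for_snk} to each of the two hypothesized Lambert series identities and compare the resulting coefficient-extraction formulas for $s_{n,k}$ and $s_{n,k}(\alpha,\beta;\delta)$. Applying the theorem to the ordinary series $\sum_{n \geq 1} a_n q^n/(1-q^n)$ (corresponding to the parameter choice $(1,0,1,0)$) yields $s_{n,k} = [q^n]\, C(q)\, q^k/(1-q^k)$, while applying it to the second series (with parameters $(\alpha,-\beta+\delta,\alpha,-\beta)$) yields $s_{n,k}(\alpha,\beta;\delta) = [q^n]\, C(q)\, q^{\alpha k-\beta+\delta}/(1-q^{\alpha k-\beta})$. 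Factoring $q^{\delta}$ out of the numerator and shifting the coefficient index by $\delta$ then immediately gives
\begin{align*}
s_{n,k}(\alpha,\beta;\delta) = [q^{n-\delta}]\, C(q)\, \frac{q^{\alpha k-\beta}}{1-q^{\alpha k-\beta}} = s_{n-\delta,\alpha k-\beta},
\end{align*}
which is the first claim.

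A small technical remark is in order at this step: the hypotheses of Theorem \ref{theorem1_GenFormula_for_snk} as stated constrain the parameters more tightly than the substitutions above allow (and the displayed formula there is off by a notational slip, since the proof of the theorem yields $[q^n]\, C(q)\, q^{\alpha k+\beta}/(1-q^{\gamma k+\delta})$). However, the argument in that theorem simply equates coefficients of $a_k$ on both sides and hence extends verbatim to arbitrary integer parameters for which the underlying series is a well-defined formal power series in $q$; the conditions $k \geq 1$ and $0 \leq \beta < \alpha$ ensure $\alpha k - \beta \geq 1$, so this mild extension applies here.

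For the particular case $C(q) = (q;q)_{\infty}$, I would substitute into the closed form for $s_{n,k}$ just obtained and cancel the factor $1-q^k$ from the infinite product to get $s_{n,k} = [q^n]\, q^k \prod_{j \neq k}(1-q^j)$. The remaining product is the signed generating function for partitions into distinct parts avoiding $k$, each weighted by $(-1)$ raised to the number of parts; multiplying by $q^k$ adjoins $k$ as an additional part and flips the sign. Consequently $[q^n]\, q^k \prod_{j \neq k}(1-q^j) = \sum_{\lambda}(-1)^{|\lambda|-1}$, where the sum ranges over partitions $\lambda$ of $n$ into distinct parts containing $k$, and separating this sum according to the parity of $|\lambda|$ identifies the right-hand side with $s_o(n,k) - s_e(n,k)$, as claimed. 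The only delicate step anywhere in the argument is this final sign-parity bookkeeping, and I do not expect it to pose any serious obstacle.
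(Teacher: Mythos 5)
Your proposal is correct and follows exactly the route the paper takes: the paper's entire proof is a one-line appeal to the generating-function formula of Theorem \ref{theorem1_GenFormula_for_snk}, and you carry out precisely that coefficient-extraction comparison (correctly noting the $n$-versus-$k$ slip in the theorem's displayed formula) together with the index shift by $\delta$. Your sign-parity verification of the special case $C(q)=(q;q)_{\infty}$ is also sound and in fact supplies detail the paper omits.
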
 
\begin{proof} 
The proof follows from the second statement in the theorem which 
provides a generating function for $s_{n,k}$ for all $n, k \geq 1$. 
\end{proof} 

\begin{remark}[Conjectures on Degenerate Cases of the Theorem] 
\label{remark_conj_degen_cases_of_thm1} 
One distinction to be made between the factorization result in the theorem and the 
results in \cite{MERCA-SCHMIDT1,MERCA-SCHMIDT2} is that for $\alpha = \gamma$ and 
$\beta = \delta$ as in the generalized expansion of 
\eqref{eqn_GenLambertSeries_LaAlphaBetaq_def} 
from the introduction, the square matrix, $A_n := (s_{i,j})_{1 \leq i,j \leq n}$, tends to 
\emph{not} be invertible for $\alpha > 1$. 
The statement in Theorem \ref{theorem1_GenFormula_for_snk} does however allow us to formulate 
generalized analogs to the results in the references for some special cases of the 
parameters in (i) of the theorem. 
For example, if we consider the factorizations of the Lambert series expansions of the form 
\begin{align*} 
L_a(1, 0, 2, 1; q) = \sum_{n \geq 1} \frac{a_n q^n}{1-q^{2n+1}}, 
\end{align*} 
with $C(q) := (q; q)_{\infty}$, 
we obtain non-singular matrices, $A_n$, as we have come to expect in the results from 
\cite{MERCA-SCHMIDT1,MERCA-SCHMIDT2} whose properties are summarized by 
Figure \ref{figure_spcase_nonsg_matrix_fact_params} where we define the corresponding 
inverse matrix entries to be of the form 
\begin{align*} 
s_{n,k}^{(-1)} & := \sum_{d|n} p(d-k) \cdot \gamma_k(n/d), 
\end{align*} 
for some fixed sequence of arithmetic functions $\gamma_k$. 
We notice by experimentation with the integer sequences database \cite{OEIS} that for $k > \gamma+\delta$ 
in the expansion of $L(1, 0, \gamma, \delta; q)$ we appear to have that 
$\gamma_k(n)$ is related to partition functions with special generating functions of some 
underspecified sort. Experimentally, we also conjecture that\footnote{ 
     \underline{\emph{Notation}}: 
     \emph{Iverson's convention} compactly specifies 
     boolean-valued conditions and is equivalent to the 
     \emph{Kronecker delta function}, $\delta_{i,j}$, as 
     $\Iverson{n = k} \equiv \delta_{n,k}$. 
     Similarly, $\Iverson{\mathtt{cond = True}} \equiv 
                 \delta_{\mathtt{cond}, \mathtt{True}}$ 
     in the remainder of the article. 
}
\begin{align} 
\label{eqn_DegenCasesOfThm_example_v1}
s_{n,k}^{(-1)} & = p(n-k) - \sum_{i=1}^n p\left(\frac{n-i}{2i+1}-k\right) \Iverson{n \equiv i \bmod 2i+1} \\ 
\notag 
     & \phantom{=p(n-k)\ } + 
     \sum_{m=2}^n \sum_{i=1}^n p\left(\frac{n-p(m+1)i-p(m-1)}{p(m+1)(2i+1)} - k\right) \times \\ 
\notag
     & \phantom{=p(n-k)+\sum\sum\ \ } \times 
     \Iverson{n \equiv p(m+1)i+p(m-1) \bmod p(m+1)(2i+1)}. 
\end{align} 
We can similarly define the analogous related 
expansions for the class of generalized Lambert series of the form 
\begin{align*} 
L_a(1, 0, 2, 1; d, q) = \sum_{n \geq 1} \frac{a_n q^n}{1-d \cdot q^{2n+1}}, 
\end{align*} 
so that the corresponding matrices of $s_{n,k}$ are also invertible. 
Figure \ref{figure_spcase_nonsg_matrix_fact_paramsd_v2} summarizes the characteristic 
expansions of the factorization when $C(q) := (q; q)_{\infty}$. 
From our computational experimentation with the 
form of this series, we see that each of the entries in the 
tables listed in the figure are polynomials in $d$. 
Moreover, as in the first degenerate series case, we conjecture a related result that 
\begin{align*} 
s_{n,k}^{(-1)} & = p(n-k) - \sum_{i=1}^n d^i \cdot p\left(\frac{n-i}{2i+1}-k\right) \Iverson{n \equiv i \bmod 2i+1} \\ 
     & \phantom{=p(n-k)\ } + 
     \sum_{m=2}^n \sum_{i=1}^n d^i \cdot p\left(\frac{n-p(m+1)i-p(m-1)}{p(m+1)(2i+1)} - k\right) \times \\ 
     & \phantom{=p(n-k)+\sum\sum\ \ } \times 
     \Iverson{n \equiv p(m+1)i+p(m-1) \bmod p(m+1)(2i+1)} \\ 
     & \phantom{=p(n-k)\ } + 
     p_{n,k}(d), 
\end{align*} 
where each $p_{n,k}(d)$ is a small polynomial in $d$ and is only non-zero in the rows $n$ indexed by the 
sequence $\{13, 22, 31, 37, 40, 49, 52, 58, 62, 67, 73, \ldots\}$. 

We can generalized the result for the first degenerate case in \eqref{eqn_DegenCasesOfThm_example_v1} 
somewhat to state that if 
\begin{align*} 
L_a(1, 0, \alpha, 1; q) = \sum_{n \geq 1} \frac{a_n q^n}{1-q^{\alpha n+1}} = 
     \frac{1}{C(q)} \sum_{n \geq 0} \sum_{k=1}^n s_{n,k}(\alpha, 1) a_k \cdot q^n, 
\end{align*} 
for some $\alpha \geq 2$, then we have that 
\begin{align} 
\notag 
s_{n,k}^{(-1)}(\alpha, 1) & := p(n-k) - \sum_{i=1}^n p\left(\frac{n-i}{\alpha i+1}-k\right) 
     \Iverson{n \equiv i \bmod \alpha i+1} + p_{n,k}(\alpha, 1), 
\end{align} 
where the sequence of $p_{n,k}(\alpha, 1)$ is integer-valued, mostly zero, and for the 
rows $n$ where $p_{n,k}(\alpha, 1)$ is non-zero this function is only non-zero for small cases 
of the columns $k$ with $k \ll n$. 
For example, when $\alpha := 3$ we have that the functions $p_{n,k}(3, 1)$ are non-zero only for the 
small column cases in the rows $n \in \{21,37,53,65,69,85,93,101,117,121,133,149, \ldots\}$, 
when $\alpha := 4$ the functions $p_{n,k}(4, 1)$ are non-zero only for the rows 
$n \in \{31,56,81,101,106,131,146, \ldots\}$, and 
when $\alpha := 5$ the functions $p_{n,k}(5, 1)$ are non-zero only for the rows 
$n \in \{43,79,115,145, \ldots\}$. The in-order non-zero indexed rows appear to have the same 
values for all $\alpha \geq 3$, i.e., $p_{21,k}(3, 1) = p_{31,k}(4, 1) = p_{43,k}(5, 1)$, and so on. 
\end{remark} 

\section{Variants of the Generalized Factorization Theorems} 
\label{Section_Variants_and_Examples}

The results in this section avoid the typically singular behavior of the matrices 
corresponding to the factorizations of \eqref{eqn_GenLambertSeries_LaAlphaBetaq_def} by construction and 
redefining the sequence $\bar{a}_n$ implicit to the more general factorization statement in 
\eqref{eqn_GenFactThmExp_def_v1}. 
The next theorem provides the generalized analog to the variant of the ordinary factorization theorems 
proved in \cite[\S 3]{MERCA-SCHMIDT2}. 

\begin{theorem}[Another Generalized Factorization Theorem] 
\label{theorem2_AnotherGenFactThm}
If we define the factorization pair $(C(q), s_{n,k})$ in \eqref{eqn_GenFactThmExp_def_v1} 
implicitly through \eqref{eqn_snk_snkinv_det-based_recrels} by 
\begin{align*}
s_{n,k}^{(-1)} & := \sum_{d|n} [q^{d-k}] \frac{1}{C(q)} \cdot \gamma(n/d), 
\end{align*} 
for some fixed arithmetic functions $\gamma(n)$ and $\widetilde{\gamma}(n) := \sum_{d|n} \gamma(d)$, 
then we have that the sequence of $\bar{a}_n$ is given by the following formula for all $n \geq 1$: 
\begin{align*} 
\bar{a}_n & = 
     \sum_{\substack{d|n \\ d \equiv \beta \bmod \alpha}} a_{\frac{d-\beta}{\alpha}} \widetilde{\gamma}(n/d). 
\end{align*} 
\end{theorem}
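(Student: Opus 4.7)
The plan is to combine two pieces—the inverse-matrix description of $\bar a_n$ coming from the matrix form of the factorization in \eqref{eqn_GenFactThmExp_def_v1}, and the generating-function interpretation of the hypothesized form of $s_{n,k}^{(-1)}$—and then let a Cauchy-product collapse together with a Dirichlet-type sum interchange do the work. The whole argument rests on the trivial identity $C(q)\cdot\tfrac{1}{C(q)}=1$ applied at the right moment.

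First I would unpack $\bar a_n$ in generating-function form. From the matrix representation in the introduction together with \eqref{eqn_GenFactThmExp_def_v1}, one has $\bar a_n=\sum_{k=1}^{n}s_{n,k}^{(-1)}B_{k-1}$, where (taking $c=1$ without loss of generality, since the dilation only rescales powers of $q$)
\[
\sum_{k\ge 1}B_{k-1}\,q^{k} \;=\; C(q)\,L_a(\alpha,\beta;q).
\]
Substituting the assumed formula for $s_{n,k}^{(-1)}$ and interchanging the order of summation over $k$ and $d$ yields
\[
\bar a_n \;=\; \sum_{d\mid n}\gamma(n/d)\,\sum_{k=1}^{d}B_{k-1}\,[q^{d-k}]\frac{1}{C(q)}.
\]
The inner sum is the $q^{d}$-coefficient of the product $\tfrac{1}{C(q)}\cdot\bigl(\sum_{k\ge 1}B_{k-1}q^{k}\bigr)=\tfrac{1}{C(q)}\cdot C(q)L_a(\alpha,\beta;q)=L_a(\alpha,\beta;q)$, and this is precisely $b_{d}$ by the definition of the Lambert series coefficients. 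Thus $\bar a_n=\sum_{d\mid n}\gamma(n/d)\,b_{d}$, a clean arithmetic convolution of $\gamma$ with the Lambert coefficient sequence.

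Second, I would substitute the explicit divisor-sum form of $b_d$ from the introduction and swap the order of summation once more. Writing $b_d=\sum_{\alpha j+\beta\mid d}a_j$ (adopting the sign convention of Theorem \ref{theorem1_GenFormula_for_snk} so that the stated congruence $d\equiv\beta\pmod\alpha$ comes out correctly), the chain of divisibilities $\alpha j+\beta\mid d\mid n$ forces $\alpha j+\beta\mid n$. Parametrising $d=(\alpha j+\beta)\ell$ with $\ell\mid n/(\alpha j+\beta)$ collapses the remaining $\ell$-sum via the definition of $\widetilde\gamma$:
\[
\sum_{\ell\mid n/(\alpha j+\beta)}\gamma\!\left(\frac{n/(\alpha j+\beta)}{\ell}\right)\;=\;\widetilde\gamma\!\left(\frac{n}{\alpha j+\beta}\right).
\]
Re-indexing the outer sum by $d:=\alpha j+\beta$, so that $d\mid n$, $d\equiv\beta\pmod\alpha$, and $j=(d-\beta)/\alpha$, produces exactly the identity claimed in the theorem.

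The hard part: there is no real obstruction. The only substantive move is the generating-function identification that recognises the inner Cauchy product as $b_{d}$, after which everything reduces to bookkeeping with Dirichlet-style sum swaps. The one place to exercise care is keeping the exponent sign convention consistent—the introduction uses $\alpha n-\beta$ in \eqref{eqn_GenLambertSeries_LaAlphaBetaq_def} while Theorem \ref{theorem1_GenFormula_for_snk} uses $\alpha n+\beta$—so that the congruence $d\equiv\beta\pmod\alpha$ and the index $(d-\beta)/\alpha$ in the claimed formula match the convention under which $b_d$ is being expanded.
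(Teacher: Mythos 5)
Your argument is correct and is essentially the paper's own proof in a slightly different order: the paper extracts the coefficient of $a_d$ first and cancels $C(q)$ against $1/C(q)$ inside the resulting double sum, whereas you cancel first to identify the inner Cauchy product as $b_d$, obtain $\bar a_n=\sum_{d\mid n}\gamma(n/d)\,b_d$, and only then expand $b_d$ as a divisor sum; the key steps (the matrix-inverse representation of $\bar a_n$, the $C(q)\cdot C(q)^{-1}=1$ collapse, and the final divisor-chain interchange yielding $\widetilde\gamma$) are identical. Your remark about reconciling the $\alpha n-\beta$ versus $\alpha n+\beta$ sign conventions is well taken, since the paper's proof silently switches to the $+\beta$ convention of Theorem \ref{theorem1_GenFormula_for_snk}.
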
 
\begin{proof} 
The proof of this result is similar to the proof given in \cite[Thm.\ 3.3]{MERCA-SCHMIDT2}. 
In particular, we begin by noticing that 
\begin{align*} 
\bar{a}_n & = \sum_{k=1}^n s_{n,k}^{(-1)} [q^k] \left(\sum_{d=1}^k 
     \frac{a_d q^{\alpha d+\beta}}{1-q^{\alpha d+\beta}}\right). 
\end{align*} 
Then if we let $c_n := [q^n] 1 / C(q)$ and let 
\begin{align*} 
t_{k,d}(\alpha, \beta) & := [q^k] \frac{q^{\alpha d+\beta}}{1-q^{\alpha d+\beta}} C(q), 
\end{align*} 
where $t_{i,d}(\alpha, \beta) = 0$ whenever $i < \alpha d+\beta$, 
we have that for each $1 \leq d \leq n$ 
\begin{align*}
[a_d] \bar{a}_n & = \sum_{k=1}^n s_{n,k}^{(-1)} t_{k,d}(\alpha, \beta) \\ 
     & = 
     \sum_{k=d}^n \left(\sum_{r|n} c_{r-k} \gamma(n/r)\right) t_{k,d}(\alpha, \beta) \\ 
     & = 
     \sum_{r|n} \left(\sum_{i=d}^r c_{r-i} t_{i,d}(\alpha, \beta)\right) \gamma(n/r) \\ 
     & = 
     \sum_{r|n} \left(\sum_{i=0}^r c_{r-i} t_{i,d}(\alpha, \beta)\right) \gamma(n/r), 
\end{align*} 
where the inner sum is generated by 
\begin{align*} 
[q^r] \frac{1}{\cancel{C(q)}} \frac{q^{\alpha d+\beta}}{1-q^{\alpha d+\beta}} \cancel{C(q)} & = 
     \Iverson{\alpha d+\beta | r}. 
\end{align*} 
Thus we have that 
\begin{align*} 
[a_d] \bar{a}_n & = \sum_{\substack{r|n \\ \alpha d + \beta | r}} \gamma(n/d), 
\end{align*} 
and then that 
\begin{align*} 
\bar{a}_n & = \sum_{\substack{d|n \\ d \equiv \beta \bmod \alpha}} a_{\frac{d-\beta}{\alpha}} 
     \sum_{\substack{r|n \\ d | r}} \gamma(n/r) \\ 
     & = 
     \sum_{\substack{d|n \\ d \equiv \beta \bmod \alpha}} a_{\frac{d-\beta}{\alpha}} 
     \sum_{r | \frac{n}{d}} \gamma\left(\frac{n}{dr}\right). 
     \qedhere
\end{align*} 
\end{proof} 

We choose to phrase the next few results cited in 
Example \ref{example_AppsOfTheVariantThm} in terms of the special case function 
$C(q) \equiv (q; q)_{\infty}$ whose reciprocal generates Euler's partition function as 
$p(n) := [q^n] (q; q)_{\infty}^{-1}$ in the 
first form of the factorization assumed by Theorem \ref{theorem2_AnotherGenFactThm}. 
We note that these identities could just as well be expanded in terms of other 
reciprocal generating functions for special partition functions are defined through the 
choice of $C(q)$, such as the partition function $q(n)$ corresponding to 
$C(q) \equiv (-q; q)_{\infty}^{-1}$. 

\begin{example}[Applications of the Theorem] 
\label{example_AppsOfTheVariantThm}
We begin by noticing that for a fixed function $C(q)$, 
the general expansions of the factorization result 
stated in \eqref{eqn_GenFactThmExp_def_v1} implies that we have 
\begin{align*} 
\bar{a}_n & = \sum_{k=1}^n s_{n,k}^{(-1)} B_{k-1}, 
\end{align*} 
where $B_k$ depends implicitly only on the selection of $a_n$ according to the 
next sums over the paired function $b_m$ in 
\eqref{eqn_GenLambertSeries_LaAlphaBetaq_def} for $k \geq 1$: 
\begin{align*} 
B_{k-1} & = \sum_{k=1}^n s_{n,k} a_k \\ 
     & = 
     b_k + \sum_{k=1}^n [q^n] \frac{1}{C(q)} \cdot b_{n-k} \\ 
     & = 
     \sum_{\alpha d-\beta|n} a_d + \sum_{k=1}^n [q^n] \frac{1}{C(q)} \cdot 
     \left(\sum_{\alpha d-\beta|n-k} a_d\right). 
\end{align*} 
For example, when we take $(a_n, \gamma(n)) := (1, n^t)$ for some 
$t \in \mathbb{C}$ in Theorem \ref{theorem2_AnotherGenFactThm}, we obtain the 
result 
\begin{align*} 
 & \sum_{\substack{d|n \\ d \equiv \beta \bmod \alpha}} \sigma_{t}\left(\frac{n}{d}\right) \\ 
     & = 
     \sum_{k=1}^n \left(\sum_{d|n} p(d-k) (n/d)^{t}\right) \left[ 
     b_k(1; \alpha, \beta) + \sum_{s = \pm 1} 
     \sum_{j=1}^{\left\lfloor \frac{\sqrt{24k+1}-s}{6} \right\rfloor} 
     b_{k-\frac{j(3j+s)}{2}}(1; \alpha, \beta)\right], 
\end{align*} 
where we define the notation for $b_k(1; \alpha, \beta)$ to be the special case of the 
restricted divisor sums over $a_n$ given by 
\[ 
b_k(a_n; \alpha, \beta) := \sum_{\alpha d-\beta|k} a_d. 
\]
If we next define $(a_n, \gamma(n)) := (n^s, n^t), (n^s, \phi(n))$, respectively, 
we similarly obtain the following identities as corollaries of the theorem above: 
\begin{align*} 
 & \sum_{\substack{d|n \\ d \equiv \beta \bmod \alpha}} \left(\frac{d-\beta}{\alpha}\right)^{s} \cdot \sigma_{t}(n/d) \\ 
     & = 
     \sum_{k=1}^n \left(\sum_{d|n} p(d-k) (n/d)^{t}\right) \left[ 
     b_k(n^s; \alpha, \beta) + \sum_{s = \pm 1} 
     \sum_{j=1}^{\left\lfloor \frac{\sqrt{24k+1}-s}{6} \right\rfloor} 
     b_{k-\frac{j(3j+s)}{2}}(n^s; \alpha, \beta)\right] \\ 
 & n \times \sum_{\substack{d|n \\ d \equiv \beta \bmod \alpha}} \frac{1}{d} \cdot 
     \left(\frac{d-\beta}{\alpha}\right)^{t} \\ 
     & = 
     \sum_{k=1}^n \left(\sum_{d|n} p(d-k) \phi(n/d)\right) \left[ 
     b_k(n^t; \alpha, \beta) + \sum_{s = \pm 1} \sum_{j=1}^{\left\lfloor \frac{\sqrt{24k+1}-s}{6} \right\rfloor} 
     b_{k-\frac{j(3j+s)}{2}}(n^t; \alpha, \beta)\right].
\end{align*} 
The generalized expansions of the well-known classical Lambert series stated in 
\eqref{eqn_WellKnown_LamberSeries_Examples} also imply the next corollaries 
of the theorem for some fixed $s, t \in \mathbb{C}$. 
\begin{align*} 
 & \sum_{\substack{d|n \\ d \equiv \beta \bmod \alpha}} \log(d) \\ 
 & \quad = \sum_{k=1}^n \left(\sum_{d|n} p(d-k) \Lambda(n/d)\right) \left[ 
     b_k(1; \alpha, \beta) + \sum_{s = \pm 1} 
     \sum_{j=1}^{\left\lfloor \frac{\sqrt{24k+1}-s}{6} \right\rfloor} 
     b_{k-\frac{j(3j+s)}{2}}(1; \alpha, \beta)\right] \\ 
 & \sum_{\substack{d|n \\ d \equiv \beta \bmod \alpha}} \left(\frac{d-\beta}{\alpha}\right)^{s} \cdot 
     \left(\frac{n}{d}\right)^t \\ 
 & = \sum_{k=1}^n \left(\sum_{d|n} p(d-k) J_t(n/d)\right) \left[ 
     b_k(n^s; \alpha, \beta) + \sum_{s = \pm 1} \sum_{j=1}^{\left\lfloor \frac{\sqrt{24k+1}-s}{6} \right\rfloor} 
     b_{k-\frac{j(3j+s)}{2}}(n^s; \alpha, \beta))\right]. 
\end{align*} 
We also have expansions of related identities involving the sum of squares function, 
$r_2(n) = [q^n] \vartheta_3(q)^2$, in the special case where 
$(\alpha, \beta, a_n) := (2, 1, 4 \cdot (-1)^{n+1})$. 
In particular, for any prescribed arithmetic functions $a_n$ and $\gamma(n)$ such that 
$\widetilde{\gamma}(n) := \sum_{d|n} \gamma(d)$, we have  expansions involving 
$r_2(n)$ given by 
\begin{align*} 
\sum_{\substack{d|n \\ d\text{ odd}}} a_{\frac{d-1}{2}} r_2(n/d) & = 
     \sum_{k=1}^n \sum_{d|n} 4 \cdot (-1)^{n/d+1} p(d-k) \Biggl[b_k(a_n; 2, 1) \\ 
     & \phantom{=\sum\sum 4 \cdot\ }  + 
     \sum_{s = \pm 1} \sum_{j=1}^{\left\lfloor \frac{\sqrt{24k+1}-s}{6} \right\rfloor} 
     b_{k-\frac{j(3j+s)}{2}}(a_n; 2, 1)\Biggr] 
\end{align*} 
and expanded by 
\begin{align*} 
\sum_{\substack{d|n \\ d\text{ odd}}} & 4 \cdot (-1)^{(d+1)/2} \widetilde{\gamma}(n/d) \\ 
     & = 
     \sum_{k=1}^n \sum_{d|n} p(d-k) \gamma(n/d) \cdot [q^k] (q; q)_{\infty} \vartheta_3(q)^2 \\ 
\sum_{\substack{d|n \\ d\text{ odd}}} & (-1)^{(d+1)/2} \left(r_2\left(\frac{n}{d}\right) 
     -4 \cdot d\left(\frac{n}{2d}\right) \Iverson{\frac{n}{d}\text{ even}}\right) \\ 
     & = 
     \sum_{k=1}^n \sum_{d|n} p(d-k) (-1)^{n/d+1} \cdot [q^k] (q; q)_{\infty} \vartheta_3(q)^2, 
\end{align*} 
where $d(n) \equiv \sigma_0(n)$ denotes the divisor function and where the 
powers of the Jacobi theta function, $\vartheta_3(q) := 1 + 2 \sum_{n \geq 1} q^{n^2}$, 
generate the more general sums of the sums of $k$ squares functions, 
$r_k(n) := [q^n] \vartheta_3(q)^k$. 
\end{example} 

\section{Factorization theorems for Lambert series over convolutions of arithmetic functions} 
\label{Section_LSFactThms_CvlOfTwoFns_and_Apps}

\subsection{Overview and definitions} 

Given two prescribed arithmetic functions $f$ and $g$ we define their \emph{convolution}, 
or \emph{Dirichlet convolution}, denoted by $h = f \ast g$, to be the function 
\begin{align*} 
(f \ast g)(n) & := \sum_{d|n} f(d) g(n/d), 
\end{align*} 
for all natural numbers $n \geq 1$ \cite[\S 2.6]{APOSTOL-ANUMT}. The usual M\"obius inversion 
result is stated in terms of convolutions as follows where 
$\mu$ is the M\"obius function: $h = f \ast 1$ if and only if $f = h \ast \mu$. 
There is a natural connection between the coefficients of the Lambert series of an arithmetic function $a_n$ and its corresponding \emph{Dirichlet generating function}, 
$\DGF(a_n; s) := \sum_{n \geq 1} a_n / n^s$. 
Namely, we have that for any $s \in \mathbb{C}$ such that $\Re(s) > 1$ 
$$b_n = [q^n] \sum_{n \geq 1} \frac{a_n q^n}{1-q^n} \quad\text{ if and only if }\quad 
  \DGF(b_n; s) = \DGF(a_n; s) \zeta(s), $$ where $\zeta(s)$ is the Riemann zeta function. Moreover, we can further connect the coefficients of the Lambert series over a convolution of arithmetic functions to its 
associated Dirichlet series by noting that $\DGF(f \ast g; s) = \DGF(f; s) \cdot \DGF(g; s)$. 

In this section, we consider the generalized Lambert series factorization theorems in the context of 
\eqref{eqn_GenLambertSeries_LaAlphaBetaq_def} 
where the function $a_n$ is defined to be a convolution of two arithmetic functions. 
Since we have not yet explored factorization theorems of this type in the references 
(\cf \cite{MERCA-SCHMIDT1,MERCA-SCHMIDT2}), we first state a 
few factorization results for Lambert series 
over these convolution functions in the ``\emph{ordinary}'' case of 
\eqref{eqn_GenLambertSeries_LaAlphaBetaq_def} where $(\alpha, \beta) := (1, 0)$. 
More precisely, we give the statements and proofs of 
Proposition \ref{prop_OnePossibleLSFact} and a closely-related theorem 
next and then proceed to evaluate several 
consequences and special case formulations following from these results below. 

\subsection{Main results} 

\begin{table}[ht!]

\linespread{1}
\begin{center} 
\tiny
\begin{equation*} 
\boxed{ 
\begin{array}{ccccccccccccccccccccc}
 1 & 0 & 0 & 0 & 0 & 0 & 0 & 0 & 0 & 0 & 0 & 0 & 0 & 0 & 0 & 0 & 0 & 0 & 0 & 0 & 0 \\
 1 & 1 & 0 & 0 & 0 & 0 & 0 & 0 & 0 & 0 & 0 & 0 & 0 & 0 & 0 & 0 & 0 & 0 & 0 & 0 & 0 \\
 1 & 1 & 0 & 0 & 0 & 0 & 0 & 0 & 0 & 0 & 0 & 0 & 0 & 0 & 0 & 0 & 0 & 0 & 0 & 0 & 0 \\
 1 & 2 & 1 & 0 & 0 & 0 & 0 & 0 & 0 & 0 & 0 & 0 & 0 & 0 & 0 & 0 & 0 & 0 & 0 & 0 & 0 \\
 1 & 1 & 0 & 0 & 0 & 0 & 0 & 0 & 0 & 0 & 0 & 0 & 0 & 0 & 0 & 0 & 0 & 0 & 0 & 0 & 0 \\
 1 & 3 & 2 & 0 & 0 & 0 & 0 & 0 & 0 & 0 & 0 & 0 & 0 & 0 & 0 & 0 & 0 & 0 & 0 & 0 & 0 \\
 1 & 1 & 0 & 0 & 0 & 0 & 0 & 0 & 0 & 0 & 0 & 0 & 0 & 0 & 0 & 0 & 0 & 0 & 0 & 0 & 0 \\
 1 & 3 & 3 & 1 & 0 & 0 & 0 & 0 & 0 & 0 & 0 & 0 & 0 & 0 & 0 & 0 & 0 & 0 & 0 & 0 & 0 \\
 1 & 2 & 1 & 0 & 0 & 0 & 0 & 0 & 0 & 0 & 0 & 0 & 0 & 0 & 0 & 0 & 0 & 0 & 0 & 0 & 0 \\
 1 & 3 & 2 & 0 & 0 & 0 & 0 & 0 & 0 & 0 & 0 & 0 & 0 & 0 & 0 & 0 & 0 & 0 & 0 & 0 & 0 \\
 1 & 1 & 0 & 0 & 0 & 0 & 0 & 0 & 0 & 0 & 0 & 0 & 0 & 0 & 0 & 0 & 0 & 0 & 0 & 0 & 0 \\
 1 & 5 & 7 & 3 & 0 & 0 & 0 & 0 & 0 & 0 & 0 & 0 & 0 & 0 & 0 & 0 & 0 & 0 & 0 & 0 & 0 \\
 1 & 1 & 0 & 0 & 0 & 0 & 0 & 0 & 0 & 0 & 0 & 0 & 0 & 0 & 0 & 0 & 0 & 0 & 0 & 0 & 0 \\
 1 & 3 & 2 & 0 & 0 & 0 & 0 & 0 & 0 & 0 & 0 & 0 & 0 & 0 & 0 & 0 & 0 & 0 & 0 & 0 & 0 \\
 1 & 3 & 2 & 0 & 0 & 0 & 0 & 0 & 0 & 0 & 0 & 0 & 0 & 0 & 0 & 0 & 0 & 0 & 0 & 0 & 0 \\
 1 & 4 & 6 & 4 & 1 & 0 & 0 & 0 & 0 & 0 & 0 & 0 & 0 & 0 & 0 & 0 & 0 & 0 & 0 & 0 & 0 \\
 1 & 1 & 0 & 0 & 0 & 0 & 0 & 0 & 0 & 0 & 0 & 0 & 0 & 0 & 0 & 0 & 0 & 0 & 0 & 0 & 0 \\
 1 & 5 & 7 & 3 & 0 & 0 & 0 & 0 & 0 & 0 & 0 & 0 & 0 & 0 & 0 & 0 & 0 & 0 & 0 & 0 & 0 \\
 1 & 1 & 0 & 0 & 0 & 0 & 0 & 0 & 0 & 0 & 0 & 0 & 0 & 0 & 0 & 0 & 0 & 0 & 0 & 0 & 0 \\
 1 & 5 & 7 & 3 & 0 & 0 & 0 & 0 & 0 & 0 & 0 & 0 & 0 & 0 & 0 & 0 & 0 & 0 & 0 & 0 & 0 \\
 1 & 3 & 2 & 0 & 0 & 0 & 0 & 0 & 0 & 0 & 0 & 0 & 0 & 0 & 0 & 0 & 0 & 0 & 0 & 0 & 0 \\
 1 & 3 & 2 & 0 & 0 & 0 & 0 & 0 & 0 & 0 & 0 & 0 & 0 & 0 & 0 & 0 & 0 & 0 & 0 & 0 & 0 \\
 1 & 1 & 0 & 0 & 0 & 0 & 0 & 0 & 0 & 0 & 0 & 0 & 0 & 0 & 0 & 0 & 0 & 0 & 0 & 0 & 0 \\
 1 & 7 & 15 & 13 & 4 & 0 & 0 & 0 & 0 & 0 & 0 & 0 & 0 & 0 & 0 & 0 & 0 & 0 & 0 & 0 & 0 \\
 1 & 2 & 1 & 0 & 0 & 0 & 0 & 0 & 0 & 0 & 0 & 0 & 0 & 0 & 0 & 0 & 0 & 0 & 0 & 0 & 0 \\
 1 & 3 & 2 & 0 & 0 & 0 & 0 & 0 & 0 & 0 & 0 & 0 & 0 & 0 & 0 & 0 & 0 & 0 & 0 & 0 & 0 \\
 1 & 3 & 3 & 1 & 0 & 0 & 0 & 0 & 0 & 0 & 0 & 0 & 0 & 0 & 0 & 0 & 0 & 0 & 0 & 0 & 0 \\
 1 & 5 & 7 & 3 & 0 & 0 & 0 & 0 & 0 & 0 & 0 & 0 & 0 & 0 & 0 & 0 & 0 & 0 & 0 & 0 & 0 \\
 1 & 1 & 0 & 0 & 0 & 0 & 0 & 0 & 0 & 0 & 0 & 0 & 0 & 0 & 0 & 0 & 0 & 0 & 0 & 0 & 0 \\
 1 & 7 & 12 & 6 & 0 & 0 & 0 & 0 & 0 & 0 & 0 & 0 & 0 & 0 & 0 & 0 & 0 & 0 & 0 & 0 & 0 \\
 1 & 1 & 0 & 0 & 0 & 0 & 0 & 0 & 0 & 0 & 0 & 0 & 0 & 0 & 0 & 0 & 0 & 0 & 0 & 0 & 0 \\
 1 & 5 & 10 & 10 & 5 & 1 & 0 & 0 & 0 & 0 & 0 & 0 & 0 & 0 & 0 & 0 & 0 & 0 & 0 & 0 & 0 \\
 1 & 3 & 2 & 0 & 0 & 0 & 0 & 0 & 0 & 0 & 0 & 0 & 0 & 0 & 0 & 0 & 0 & 0 & 0 & 0 & 0 \\
 1 & 3 & 2 & 0 & 0 & 0 & 0 & 0 & 0 & 0 & 0 & 0 & 0 & 0 & 0 & 0 & 0 & 0 & 0 & 0 & 0 \\
 1 & 3 & 2 & 0 & 0 & 0 & 0 & 0 & 0 & 0 & 0 & 0 & 0 & 0 & 0 & 0 & 0 & 0 & 0 & 0 & 0 \\
 1 & 8 & 19 & 18 & 6 & 0 & 0 & 0 & 0 & 0 & 0 & 0 & 0 & 0 & 0 & 0 & 0 & 0 & 0 & 0 & 0 \\
 1 & 1 & 0 & 0 & 0 & 0 & 0 & 0 & 0 & 0 & 0 & 0 & 0 & 0 & 0 & 0 & 0 & 0 & 0 & 0 & 0 \\
 1 & 3 & 2 & 0 & 0 & 0 & 0 & 0 & 0 & 0 & 0 & 0 & 0 & 0 & 0 & 0 & 0 & 0 & 0 & 0 & 0 \\
 1 & 3 & 2 & 0 & 0 & 0 & 0 & 0 & 0 & 0 & 0 & 0 & 0 & 0 & 0 & 0 & 0 & 0 & 0 & 0 & 0 \\
 1 & 7 & 15 & 13 & 4 & 0 & 0 & 0 & 0 & 0 & 0 & 0 & 0 & 0 & 0 & 0 & 0 & 0 & 0 & 0 & 0 \\
 1 & 1 & 0 & 0 & 0 & 0 & 0 & 0 & 0 & 0 & 0 & 0 & 0 & 0 & 0 & 0 & 0 & 0 & 0 & 0 & 0 \\
 1 & 7 & 12 & 6 & 0 & 0 & 0 & 0 & 0 & 0 & 0 & 0 & 0 & 0 & 0 & 0 & 0 & 0 & 0 & 0 & 0 \\
 1 & 1 & 0 & 0 & 0 & 0 & 0 & 0 & 0 & 0 & 0 & 0 & 0 & 0 & 0 & 0 & 0 & 0 & 0 & 0 & 0 \\
 1 & 5 & 7 & 3 & 0 & 0 & 0 & 0 & 0 & 0 & 0 & 0 & 0 & 0 & 0 & 0 & 0 & 0 & 0 & 0 & 0 \\
 1 & 5 & 7 & 3 & 0 & 0 & 0 & 0 & 0 & 0 & 0 & 0 & 0 & 0 & 0 & 0 & 0 & 0 & 0 & 0 & 0 \\
 1 & 3 & 2 & 0 & 0 & 0 & 0 & 0 & 0 & 0 & 0 & 0 & 0 & 0 & 0 & 0 & 0 & 0 & 0 & 0 & 0 \\
 1 & 1 & 0 & 0 & 0 & 0 & 0 & 0 & 0 & 0 & 0 & 0 & 0 & 0 & 0 & 0 & 0 & 0 & 0 & 0 & 0 \\
 1 & 9 & 26 & 34 & 21 & 5 & 0 & 0 & 0 & 0 & 0 & 0 & 0 & 0 & 0 & 0 & 0 & 0 & 0 & 0 & 0 \\
 1 & 2 & 1 & 0 & 0 & 0 & 0 & 0 & 0 & 0 & 0 & 0 & 0 & 0 & 0 & 0 & 0 & 0 & 0 & 0 & 0 \\
 1 & 5 & 7 & 3 & 0 & 0 & 0 & 0 & 0 & 0 & 0 & 0 & 0 & 0 & 0 & 0 & 0 & 0 & 0 & 0 & 0 \\
\end{array}
}
\end{equation*}
\end{center} 
\linespread{2}

\caption{The functions $\ds_{j,1}(n)$ for columns $1 \leq j \leq 21$ and $1 \leq n \leq 50$} 
\label{table_dsjgn_gEquivOne_listings} 

\end{table} 

\begin{prop}[One Possible Factorization] 
\label{prop_OnePossibleLSFact} 
Let $f$ and $g$ denote non-identically-zero arithmetic functions. 
Suppose that we have an ordinary Lambert series factorization for any prescribed 
arithmetic function $a_n$ of the form 
\begin{align*} 
\tag{i}
\sum_{n \geq 1} \frac{a_n q^n}{1-q^n} & = \frac{1}{C(q)} \sum_{n \geq 1} \sum_{k=1}^n 
     s_{n,k} a_k \cdot q^n, 
\end{align*} 
so that, for example, when $C(q) := (q; q)_{\infty}$ we have that $s_{n,k} = s_o(n, k) - s_e(n, k)$ 
where $s_o(n, k)$ and $s_e(n, k)$ respectively denote the number of $k$'s in all 
partitions of $n$ into an odd (even) number of distinct parts. 
Then we have one possible formulation of a factorization theorem for the Lambert series over the 
convolution function $h = f \ast g$ expanded as 
\begin{align*} 
\tag{ii} 
\sum_{n \geq 1} \frac{(f \ast g)(n) q^n}{1-q^n} & = \frac{1}{C(q)} \sum_{n \geq 1} \sum_{k=1}^n 
     \widetilde{s}_{n,k}(g) f(k) \cdot q^n, 
\end{align*} 
where 
\begin{equation*} 
\tag{iii}
\widetilde{s}_{n,k}(g) = \sum_{j=1}^n s_{n,kj} \cdot g(j). 
\end{equation*} 
\end{prop}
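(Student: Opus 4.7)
The plan is to multiply (ii) through by $C(q)$ and verify the resulting formal power series identity by expanding the Lambert series of $f \ast g$ as a double sum and applying the hypothesis (i) termwise.

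First, I would observe that since (i) is assumed to hold for an arbitrary sequence $\{a_n\}$, specializing to $a_n := \delta_{n,K}$ for each fixed $K \geq 1$ extracts the ``building-block'' identity
\begin{equation*}
C(q) \cdot \frac{q^K}{1-q^K} \;=\; \sum_{n \geq 1} s_{n,K}\, q^n,
\end{equation*}
exactly in the spirit of Theorem \ref{theorem1_GenFormula_for_snk}. Since the left-hand side is divisible by $q^K$ as a formal power series, this also records the triangularity $s_{n,K} = 0$ whenever $K > n$, which will be needed at the end.

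Next, I would expand the Lambert series of the convolution into a double sum via the standard reindexing $(f \ast g)(n) = \sum_{de=n} f(d) g(e)$, which gives
\begin{equation*}
\sum_{n \geq 1} \frac{(f \ast g)(n)\, q^n}{1-q^n} \;=\; \sum_{d \geq 1} \sum_{e \geq 1} \frac{f(d) g(e)\, q^{de}}{1-q^{de}}.
\end{equation*}
Multiplying by $C(q)$ and applying the building-block identity with $K = de$ to each summand on the right converts this to
\begin{equation*}
C(q) \sum_{n \geq 1} \frac{(f \ast g)(n)\, q^n}{1-q^n} \;=\; \sum_{n \geq 1} q^n \sum_{d,\, e \geq 1} f(d) g(e) \, s_{n,de}.
\end{equation*}

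Finally, I would compare this with the claimed right-hand side of (ii) after multiplying through by $C(q)$. Substituting the definition (iii) and relabelling $(k,j) \mapsto (d,e)$ shows that the coefficient of $q^n$ there is precisely $\sum_{k,j \geq 1} f(k) g(j)\, s_{n,kj}$, where the truncations $1 \leq k \leq n$ and $1 \leq j \leq n$ appearing in (iii) lose no terms because of the triangularity noted in the first step. The proof is really just careful bookkeeping of indices; the only subtle point is matching the support of $s_{n,m}$ on both sides so that the truncated finite double sum defining $\widetilde{s}_{n,k}(g)$ agrees with the unrestricted double sum arising from the convolution expansion.
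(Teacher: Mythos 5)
Your argument is correct and is essentially the paper's proof carried out in a different order: both reduce to showing that the coefficient of $f(k)\,q^n$ in $C(q)\sum_{n\ge 1}(f\ast g)(n)\,q^n/(1-q^n)$ equals $\sum_{j} s_{n,kj}\,g(j)$, via the same reindexing $\sum_{i\,:\,k\mid i} s_{n,i}\,g(i/k)=\sum_j s_{n,kj}\,g(j)$. The paper gets there by applying (i) with $a_n:=(f\ast g)(n)$ and extracting the coefficient of the formal symbol $f(k)$ from the right-hand side, whereas you work from the left-hand side through the building-block identity $C(q)\,q^{K}/(1-q^{K})=\sum_{n\ge K}s_{n,K}\,q^{n}$ (the content of Theorem \ref{theorem1_GenFormula_for_snk}); your explicit appeal to $s_{n,m}=0$ for $m>n$ to justify truncating the $j$-sum at $n$ in (iii) is a detail the paper leaves implicit.
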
 
\begin{proof} 
It is apparent by the expansions on the left-hand-side of (ii) that there is some sequence of 
$\widetilde{s}_{n,k}(g)$ depending on the function $g$ that satisfies the factorization of the 
form in (i) when $a_n \mapsto (f \ast g)(n)$. 
For a fixed $k \geq 1$, we begin by evaluating the coefficients of $f(k)$ on the 
right-hand-side of (ii) as follows: 
\begin{align*} 
[f(k)] \left(\sum_{n \geq 1} \sum_{i=1}^n s_{n,i} (f \ast g)(i) \cdot q^n\right) 
     & = \sum_{n \geq 1} \sum_{i=1}^n [f(k)] s_{n,i} \sum_{d|i} f(d) g(i/d) \cdot q^n \\ 
     & = \sum_{n \geq 1} \sum_{i=1}^n s_{n,i} g(i/k) \Iverson{k|i} \cdot q^n \\ 
     & = \sum_{n \geq 1} \left(\sum_{j=1}^n s_{n,kj} \cdot g(j)\right) q^n. 
\end{align*} 
Thus we have that the formula for $\widetilde{s}_{n,k}(g)$ given in (iii) is correct 
as claimed. 
\end{proof} 

By Corollary \ref{cor_ConsequenceOfThm1_ConnectionBetweenOrdFactThms} 
we can easily generalize this result to the analogous series 
expansions of \eqref{eqn_GenLambertSeries_LaAlphaBetaq_def}. 
However, in these cases the matrix of $s_{n,k}$ is typically 
singular so that we are unable to formulate an analog to the following theorem for the 
inverse sequences which implies many useful and interesting new results discussed in the 
following examples. 

\noindent 
\textbf{Notation}: 
For a fixed arithmetic function $g$ with $g(1) := 1$, 
let the functions $\ds_{j,g}(n)$ be defined recursively for natural numbers 
$j \geq 1$ as 
\begin{align*} 
\ds_{j,g}(n) & := 
     \begin{cases} 
     g_{\pm}(n), & \text{ if $j = 1$; } \\ 
     \sum\limits_{\substack{d|n \\ d>1}} g(d) \ds_{j-1,g}(n/d), & \text{ if $j > 1$, } 
     \end{cases} 
\end{align*} 
where $g_{\pm}(n) := g(n) \Iverson{n > 1} - \delta_{n,1}$ and 
let the notation for the $k$-shifted partition function be defined as 
$p_k(n) := p(n-k)$ for $k \geq 1$. If we let the function $\widetilde{\ds}_{j,g}(n)$ 
denote the $j$-fold convolution of $g$ with itself, i.e., that 
$$\widetilde{\ds}_{j,g}(n) = \underset{\text{$n$ times}}{\underbrace{\left(g_{\pm} \ast g \ast \cdots \ast g\right)}}(n), $$ 
then we can prove easily by induction that for all $m, n \geq 1$ we have the expansion 
$$\ds_{m,g}(n) = \sum_{i=0}^{m-1} \binom{m-1}{i} (-1)^{m-1-i} \cdot \widetilde{\ds}_{i+1,g}(n). $$ 
We then define the following notation for the sums of the variant convolution functions for 
use in the theorem below when $n \geq 1$: 
\begin{align*} 
D_{n,g}(n) & := \sum_{j=1}^n \ds_{2j,g}(n) = \sum_{m=1}^{\lfloor n/2 \rfloor} \sum_{i=0}^{2m-1} 
     \binom{2m-1}{i} (-1)^{i+1} \widetilde{\ds}_{i+1,g}(n). 
\end{align*} 
A listing of the values of the functions $\ds_{j,g}(n)$ in the special form when 
$g(n) \equiv 1$ is tabulated in Table \ref{table_dsjgn_gEquivOne_listings}. The sum 
\[
\sum_{i=1}^n \ds_{2i,1}(n) \mapsto \{0, 1,1,2,1,3,1,4,2,3,1,8,1,3,3,8,\ldots\}, 
\] 
and its M\"obius transform appear in the integer sequences database as the number of 
perfect partitions of $n$ \cite[A002033, A174726]{OEIS}. 
The next result provides the form of the inverse sequences, $s_{n,k}^{(-1)}$, 
in the expansions proved in the proposition immediately above. 

\begin{table}[ht!]
\linespread{1}
\begin{center} 
\small
\begin{equation*} 
\boxed{ 
\begin{array}{cccccccccc}
 1 & 0 & 0 & 0 & 0 & 0 & 0 & 0 & 0 & 0 \\
 0 & 1 & 0 & 0 & 0 & 0 & 0 & 0 & 0 & 0 \\
 0 & -1 & 1 & 0 & 0 & 0 & 0 & 0 & 0 & 0 \\
 0 & 0 & -1 & 1 & 0 & 0 & 0 & 0 & 0 & 0 \\
 0 & -1 & -1 & -1 & 1 & 0 & 0 & 0 & 0 & 0 \\
 0 & 1 & 1 & -1 & -1 & 1 & 0 & 0 & 0 & 0 \\
 0 & -1 & -1 & 0 & -1 & -1 & 1 & 0 & 0 & 0 \\
 0 & 1 & 0 & 1 & 0 & -1 & -1 & 1 & 0 & 0 \\
 0 & -1 & 2 & 0 & 0 & 0 & -1 & -1 & 1 & 0 \\
 0 & 2 & -1 & -1 & 2 & 0 & 0 & -1 & -1 & 1 \\
 0 & -2 & -1 & 1 & -1 & 1 & 0 & 0 & -1 & -1 \\
 0 & 2 & 3 & 2 & 0 & 1 & 1 & 0 & 0 & -1 \\
 0 & -2 & -2 & -1 & 0 & 0 & 0 & 1 & 0 & 0 \\
 0 & 3 & -1 & -2 & 0 & -1 & 2 & 0 & 1 & 0 \\
 0 & -3 & 4 & 1 & 3 & 0 & -1 & 1 & 0 & 1 \\
 0 & 3 & -3 & 2 & -2 & 0 & -1 & 1 & 1 & 0 \\
 0 & -4 & -2 & -1 & -2 & 1 & 0 & -1 & 0 & 1 \\
 0 & 5 & 5 & -3 & 0 & 1 & 0 & -1 & 1 & 0 \\
 0 & -5 & -4 & 1 & 0 & -1 & 0 & 0 & -1 & 0 \\
 0 & 5 & -2 & 4 & 4 & -2 & 0 & -1 & -1 & 1 \\
 0 & -6 & 8 & -2 & -3 & -1 & 3 & 1 & -1 & -1 \\
\end{array}
}
\end{equation*}
\end{center} 
\linespread{2}

\caption{The sequences $\rho_{n,1}^{(i)}$ for the first rows $1 \leq n \leq 21$ and 
         columns $1 \leq i \leq 10$}. 
\label{table_rhonki_keq1_seqs} 

\end{table} 

\begin{theorem}[Inverse Sequences]
\label{claim_snk_inverses}
When $C(q) := (q; q)_{\infty}$, the inverse sequences from 
Proposition \ref{prop_OnePossibleLSFact} satisfy 
\begin{align*} 
\sum_{d|n} s_{n,k}^{(-1)}(g) & = p_k(n) + \sum_{j=1}^{\Omega(n)} (p_k \ast \ds_{2j,g})(n) \\ 
     & = p_k(n) + (p_k \ast D_{n,g})(n), 
\end{align*} 
which by M\"obius inversion is equivalent to 
\begin{align*} 
s_{n,k}^{(-1)}(g) & = (p_k \ast \mu)(n) + \sum_{j=1}^{\Omega(n)} (p_k \ast \ds_{2j,g} \ast \mu)(n) \\ 
     & = (p_k \ast \mu)(n) + (p_k \ast D_{n,g} \ast \mu)(n), 
\end{align*} 
where $D_{n,g}(n)$ is defined as in the remarks given above. 
We note that strictly speaking replacing the 
upper bound on the first sum above with $n$ in place of $\Omega(n)$ has no effect on the value of the sum. 
\end{theorem}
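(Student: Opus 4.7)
The plan is to invert the matrix $\widetilde{A}(g) := (\widetilde{s}_{i,j}(g))_{i,j \geq 1}$ by factoring it as a product of two more manageable matrices. Writing out the action of $\widetilde{A}(g)$ on a column vector $(f(k))_{k \geq 1}$ and using $s_{n,m} = [q^n](q;q)_\infty \, q^m/(1-q^m)$ together with $\widetilde{s}_{n,k}(g) = \sum_{j \geq 1} s_{n,kj}\, g(j)$, the resulting triple sum over $(k, j, \ell)$ with $kj\ell = N$ is recognized as a Dirichlet convolution $f \ast G$ with $G := g \ast 1$, so that
\begin{equation*}
\sum_{k=1}^n \widetilde{s}_{n,k}(g)\, f(k) = [q^n] \, (q;q)_\infty \sum_{N \geq 1} (f \ast G)(N)\, q^N.
\end{equation*}
This exhibits the factorization $\widetilde{A}(g) = P \cdot D$, where $P$ is the lower-triangular Toeplitz matrix with $P_{i,j} := [q^{i-j}](q;q)_\infty$ and $D$ is the Dirichlet-convolution matrix with kernel $G$.

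Inverting each factor gives $\widetilde{A}(g)^{-1} = D^{-1} P^{-1}$. Since $1/(q;q)_\infty = \sum_{n \geq 0} p(n) q^n$, the Toeplitz inverse satisfies $(P^{-1})_{m,k} = p_k(m)$, and for the Dirichlet piece $G^{-1} = (g \ast 1)^{-1} = g^{-1} \ast \mu$. Composing and rewriting,
\begin{equation*}
s_{n,k}^{(-1)}(g) = (g^{-1} \ast \mu \ast p_k)(n),
\end{equation*}
and then summing over divisors of $n$ (so that the external $1$ annihilates the internal $\mu$ via $1 \ast \mu = \delta$),
\begin{equation*}
\sum_{d \mid n} s_{d,k}^{(-1)}(g) = (g^{-1} \ast p_k)(n).
\end{equation*}
I read the first display of the theorem as this identity, i.e.\ with the summation index on the left being the dummy variable $d$; this is what M\"obius inversion to the second display requires.

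The crux of the argument is now identifying $g^{-1}$ with $\delta + \sum_{j \geq 1} \ds_{2j,g}$. Passing to formal Dirichlet generating functions and writing $\hat{g}(s) := \sum_{n \geq 1} g(n)/n^s$, the recursive definition of $\widetilde{\ds}_{j,g}$ yields $\widetilde{\ds}_{j,g} \leftrightarrow (\hat{g} - 2)\, \hat{g}^{j-1}$, and the binomial expansion stated for $\ds_{j,g}$ in the preamble collapses via $(x+y)^{j-1}$ with $(x, y) = (\hat{g}, -1)$ to $\ds_{j,g} \leftrightarrow (\hat{g} - 2)(\hat{g} - 1)^{j-1}$. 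Because $g(1) = 1$, the series $r := \hat g - 1$ vanishes at $n = 1$, so $r^m(n) = 0$ whenever $m > \Omega(n)$; in particular every sum $\sum_j \ds_{2j,g}(n)$ terminates, consistent with the stated upper limit $\Omega(n)$. Summing the formal geometric series in $r$,
\begin{equation*}
\sum_{j \geq 1} \ds_{2j,g} \;\longleftrightarrow\; (\hat{g}-2) \cdot \frac{\hat{g}-1}{1 - (\hat{g}-1)^2} = \frac{(\hat{g}-2)(\hat{g}-1)}{\hat{g}\,(2 - \hat{g})} = \frac{1}{\hat{g}} - 1,
\end{equation*}
which is precisely $g^{-1} - \delta$. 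Convolving with $p_k$ produces the first display of the theorem, and a single M\"obius inversion (equivalently, applying $\ast \mu$ to both sides) gives the second. The main technical obstacle is this generating-series identity; once it is in place, the matrix factorization and the collapse $1 \ast \mu = \delta$ handle the rest mechanically.
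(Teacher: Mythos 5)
Your proof is correct, and it takes a genuinely different route from the paper's. You factor the coefficient matrix as $\widetilde{A}(g) = P\cdot D_G$ with $P$ the lower-triangular Toeplitz matrix of $(q;q)_\infty$ and $D_G$ the Dirichlet-convolution matrix of $G = g\ast 1$, then invert each factor separately to get $s_{n,k}^{(-1)}(g) = (g^{-1}\ast\mu\ast p_k)(n)$; the paper instead verifies the proposed inverse by substituting it back into the factorization and working through the auxiliary quantities $\rho_{n,k}^{(i)}$, $t_{n,k}(h)$ and $u_{n,k}^{(i)}$, including a generating-function subclaim $\sum_n u_{n,k}^{(i)}q^n = q^{ik}\sum_n p(n)q^{in}$ that plays the role of your Toeplitz inversion. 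Both arguments ultimately reduce to the single identity $\delta + D_{\cdot,g} = g^{-1}$ (Dirichlet inverse), and here your proof is strictly more complete: the paper explicitly declines to prove this step (``a result which we do not prove here and only mention for the sake of brevity''), whereas your Dirichlet-generating-function computation $(\hat g-2)(\hat g-1)/\bigl(\hat g(2-\hat g)\bigr) = 1/\hat g - 1$, together with the observation that $(g-\delta)^{\ast m}(n)=0$ for $m>\Omega(n)$ (which both legitimizes the formal geometric series and explains the $\Omega(n)$ upper limit in the statement), closes that gap. Your reading of the left-hand side of the first display as $\sum_{d\mid n}s_{d,k}^{(-1)}(g)$ is also the correct one --- the theorem statement has a typo there, and the paper's own proof writes the divisor sum over $\widehat{s}_{d,k}^{(-1)}$ exactly as you do. What your approach buys is a shorter, self-contained argument in which the ``partition'' and ``multiplicative'' parts of the inverse are cleanly decoupled; what the paper's approach buys is an explicit combinatorial handle on the intermediate arrays $\rho_{n,k}^{(i)}$, which it tabulates and relates to earlier results.
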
 
\begin{proof}[Proof of the Claim]  
Let the proposed inverse sequence function be defined in the following notation which we denote in 
shorthand by $\widehat{s}_k \equiv \widehat{s}_k(n)$: 
\begin{align*} 
\widehat{s}_{n,k}^{(-1)}(g) & := (p_k \ast \mu)(n) + (p_k \ast D_{n,g} \ast \mu)(n). 
\end{align*} 
We begin as in the proof of Theorem 3.2 in \cite{MERCA-SCHMIDT1} and consider the ordinary, 
non-convolved Lambert series over the function $\widehat{s}_k$. 
More precisely, by the expansion in (i) of the proposition we must show that 
\begin{align*} 
\sum_{d|n} \widehat{s}_{d,k}^{(-1)} & := p_k(n) + (p_k \ast D_{n,g})(n) \\ 
     & = \sum_{m=0}^n \sum_{j=1}^{n-m} \widetilde{s}_{n-m,j}(g) \cdot \widehat{s}_{j,k}^{(-1)} \cdot p(m). 
\end{align*} 
For $n, k, i \geq 1$ with $k, i \leq n$, let the coefficient functions, $\rho_{n,k}^{(i)}$ 
be defined as 
\[
\rho_{n,k}^{(i)} := \sum_{j=1}^n s_{n,ij} \cdot \widetilde{s}_{j,k}^{(-1)}. 
\] 
Then for any fixed arithmetic function $h$ we can prove (by considering the related expansions of the 
factorizations in (ii) of the proposition for $\widehat{s}_k \ast g$) that 
\begin{align*} 
\tag{i}
t_{n,k}(h) & := \sum_{j=1}^n s_{n,j} \cdot (\widetilde{s}_{n,k}^{(-1)} \ast h)(j) 
      = \sum_{i=1}^n \rho_{n,k}^{(i)} \cdot h(i). 
\end{align*} 
It remains to show that 
\[
\tag{ii} 
\sum_{m=0}^n t_{n-m,k}(h) \cdot p(m) = (p_k \ast h)(n). 
\] 
Since we can expand the left-hand-side of the previous sum as 
\begin{align*} 
\sum_{m=0}^n \sum_{i=1}^{n-m} \rho_{n-m,k}^{(i)} \cdot h(i) \cdot p(m) & = 
     \sum_{i=1}^n h(i) \underset{ := u_{n,k}^{(i)}}{\underbrace{\left( 
     \sum_{m=0}^n \rho_{n-m,k}^{(i)} \cdot p(m)\right)}}, 
\end{align*} 
to complete the proof of (ii) we need to prove a subclaim 
that (I) $u_{n,k}^{(i)} = 0$ if $i \not{\mid} n$; and (II) 
if $i | n$ then $u_{n,k}^{(i)} = p\left(\frac{n}{i}-k\right)$. 

\noindent
\textit{Proof of Subclaim}: 
For $i := 1$, this is clearly the case since $\rho_{n-m,k}^{(i)} = \Iverson{n-m=k}$. 
For subsequent cases of $i \geq 2$, it is apparent that 
$$\rho_{n,k}^{(i)} = \rho_{n-(k-1)i,1}$$ much as in the cases of the tables for the inverse sequences, 
$s_{n,k}^{(-1)} = (p_k \ast \mu)(n)$, given in the reference \cite[\S 3]{MERCA-SCHMIDT1} 
(see Table \ref{table_rhonki_keq1_seqs}). Finally, we claim that generating functions for the 
sequences of $u_{n,k}^{(i)}$ for each $i \geq 2$ are expanded in the form of
\[
\sum_{n \geq 0} u_{n,k}^{(i)} \cdot q^n = 
     \prod_{j=1}^{i-1} (q^j; q^i)_{\infty} \times \frac{q^{ik}}{(q; q)_{\infty}} = 
     q^{ik} \cdot \sum_{n \geq 0} p(n) q^{in}, 
\] 
which we see by comparing coefficients on the right-hand-side of the 
previous equation implies our claim. 

\noindent
\textit{Completing the Proof of the Inverse Formula}: 
What we have shown by proving (ii) above is an inverse formula for an ordinary 
Lambert series factorization over the sequence of 
$a_j := (\widetilde{s}_{n,k}^{(-1)} \ast g)(j)$. 
In particular, by M\"obius inversion (ii) shows that we have 
\begin{align*} 
(f \ast g)(n) = (s_{n,k}^{(-1)} \ast g)(n) & \iff 
(f - s_{n,k}^{(-1)}) \ast g \equiv 0 \\ 
     & \implies 
     f_n = s_{n,k}^{(-1)},\ \text{ when $g \not{\equiv} 0$. } 
\end{align*} 
More to the point, when we define $f_n := \widetilde{s}_{n,k}^{(-1)}(g)$ 
where by convenience and experimental suggestion we let 
$$\widetilde{s}_{n,k}^{(-1)}(g) = s_{n,k}^{(-1)} \ast t_{n,k}^{(-1)}(g),$$ 
for some convolution-wise factorization of this inverse sequence, we can now prove the 
exact formula for the inverse sequence claimed in the theorem statement. 
In the forward direction, we suppose that 
\[t_{n,k}^{(-1)}(g) = D_{n,g}(n) + \varepsilon(n),\] 
where $\varepsilon(n) = \delta_{n,1}$ 
is the multiplicative identity and then see from the formulas for $D_{n,g}(n)$ discussed 
before the claim that $g \ast (D_{n,g}+\varepsilon) = \varepsilon$, which proves that 
our inverse formula is correct in this case. 
Conversely, if we require that 
\[
s_{n,k}^{(-1)} \ast t_{n,k}^{(-1)}(g) \ast g = s_{n,k}^{(-1)} 
\] 
for all $n$ and choices of the function $g$, we must have that 
$t_{n,k}^{(-1)} \ast g = \varepsilon$, and so we see that 
$t_{n,k}^{(-1)} = D_{n,g} + \varepsilon$ as required. 
That is to say, we have proved our result using the implicit statement that 
$t \ast g = \varepsilon$ if and only if $t = D_{n,g} + \varepsilon$, i.e., 
that $t = D_{n,g} + \varepsilon$ is the unique function such that 
$t \ast g = \varepsilon$ for all $n \geq 1$, 
a result which we do not prove here and only mention for the sake of brevity. 
\end{proof} 

\subsection{Corollaries and applications} 

We have several immediate consequences of the theorem, which in some respects follows naturally as 
a corollary of the expansions of the proposition above. 
Some of these applications are discussed in the next example. 

\begin{example}[Applications] 
To ease the notation in the applications that follow, for integers $n \geq 0$ 
we define the functions $B_n(f \ast g)$  in the shorthand of 
\begin{align*} 
B_n(f \ast g) & := [q^{n+1}]\left(\sum_{m \geq 1} \frac{(f \ast g)(n) q^n \times (q; q)_{\infty}}{1-q^n}\right) \\ 
     & \phantom{:} = 
     b_{n+1}(f \ast g) + \sum_{s = \pm 1} \sum_{j=1}^{\left\lfloor \frac{\sqrt{24k+25}-s}{6} \right\rfloor} 
     (-1)^j b_{n+1-\frac{j(3j+s)}{2}}(f \ast g), 
\end{align*} 
where $b_k(f \ast g) = \sum_{d|n} (f \ast g)(d)$ is as on the left-hand-side of 
\eqref{eqn_GenLambertSeries_LaAlphaBetaq_def} for $(\alpha, \beta) := (1, 0)$. 
Then using the notation in Proposition \ref{prop_OnePossibleLSFact} and in the claim immediately above, 
we see that our new expansions imply that 
\begin{align*} 
f(n) & = \sum_{k=1}^n s_{n,k}^{(-1)}(g) \cdot B_{k-1}(f, g). 
\end{align*} 
We will employ several well-known Dirichlet convolution results which provide applications of the 
previous expansions. 
For example, given any fixed $t \in \mathbb{C}$ we have the known identity that 
$\Id_t = \sigma_t \ast \mu$ where $\Id_t(n) = n^t$ is the $t^{th}$ power function. 
In particular, this identity implies each of the following identities: 
\begin{align*} 
\mu(n) & = \sum_{k=1}^n s_{n,k}^{(-1)}(\sigma_t) \cdot B_{k-1}(\sigma_t) \\ 
\sigma_t(n) & = \sum_{k=1}^n s_{n,k}^{(-1)}(\mu) \cdot B_{k-1}(\sigma_t). 
\end{align*} 
Similarly, since $\Id_1 = \phi \ast 1$, 
$\sigma_1 = \phi \ast \Id_1$, and $\Lambda = \log \ast \mu$, we have the 
following related expansions: 
\begin{align*} 
\phi(n) & = \sum_{k=1}^n s_{n,k}^{(-1)}(\Id_0) \cdot B_{k-1}(\sigma_1) \\ 
\tag{$\dagger$} 
1 & = \sum_{k=1}^n s_{n,k}^{(-1)}(\phi) \cdot B_{k-1}(\sigma_1) \\ 
\phi(n) & = \sum_{k=1}^n s_{n,k}^{(-1)}(\Id_1) \cdot B_{k-1}(\sigma_1 \ast 1) \\ 
n & = \sum_{k=1}^n s_{n,k}^{(-1)}(\phi) \cdot B_{k-1}(\sigma_1 \ast 1) \\ 
\log(n) & = \sum_{k=1}^n s_{n,k}^{(-1)}(\mu) \cdot B_{k-1}(\log) \\ 
\mu(n) & = \sum_{k=1}^n s_{n,k}^{(-1)}(\log) \cdot B_{k-1}(\log). 
\end{align*} 
We emphasize that the formulas listed above are special in nature due to the 
unconventional dependence of the inverse sequences, $s_{n,k}^{(-1)}(g)$ on 
multiple $j$-fold convolutions of the function $g$ for $1 \leq j \leq n$. 
The next explicit computation illustrates this property for the 
identity tagged in ($\dagger$) above when $n := 4$: 
\begin{align*} 
1 & = \left[(\phi_{\pm} \ast \phi \ast \phi \ast \phi)(4) - 3 \cdot (\phi_{\pm} \ast \phi \ast \phi)(4) + 
     4 \cdot (\phi_{\pm} \ast \phi)(4) - 2 \cdot \phi(4) + 2\right] \sigma(1) \\ 
     & \phantom{=\ } - 
     \left[(\phi_{\pm} \ast \phi)(2) - \phi(2) + 1\right](\sigma(1) - \sigma(2)) - 
     (\sigma(1) + \sigma(2) - \sigma(3)) \\ 
     & \phantom{=\ } - 
     (\sigma(2) + \sigma(3) - \sigma(4)). 
\end{align*} 
Many other examples of known convolution results to which we can apply our new 
expansions are found in the references \cite{APOSTOL-ANUMT,SURVEY-DSERIES}. 
\end{example}

\begin{cor}[Formulas for the Dirichlet Inverse] 
For any prescribed arithmetic function $f$ defined such that $f(1) = 1$, we have 
a formula for its Dirichlet inverse function given by 
\begin{align*}
f^{-1}(n) & = \sum_{k=1}^n \left((p_k \ast \mu)(n) + (p_k \ast D_{n,f} \ast \mu)(n)\right) \cdot 
     [q^{k-1}] \frac{(q; q)_{\infty}}{1-q}. 
\end{align*} 
\end{cor}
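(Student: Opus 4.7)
The plan is to specialize the inversion identity established in the preceding example to the choice where the ``outer'' arithmetic function is the Dirichlet inverse of $f$. Since $f(1) = 1$ in particular forces $f(1) \neq 0$, the Dirichlet inverse $f^{-1}$ exists and satisfies $f \ast f^{-1} = \varepsilon$, where $\varepsilon(n) = \delta_{n,1}$ is the identity for Dirichlet convolution. Moreover, the normalization $f(1) = 1$ is precisely what is required to apply Theorem \ref{claim_snk_inverses} with $g := f$, since that result is stated under the assumption $g(1) = 1$.

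Concretely, I would invoke Proposition \ref{prop_OnePossibleLSFact} together with Theorem \ref{claim_snk_inverses} taking $g := f$ and the arbitrary outer function (the one called $f$ in the statement of the proposition) to be $f^{-1}$. The inversion formula highlighted in the example then specializes to
\begin{align*}
f^{-1}(n) & = \sum_{k=1}^n s_{n,k}^{(-1)}(f) \cdot B_{k-1}(f^{-1} \ast f) = \sum_{k=1}^n s_{n,k}^{(-1)}(f) \cdot B_{k-1}(\varepsilon).
\end{align*}

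Next, I would evaluate $B_{k-1}(\varepsilon)$ directly from its definition. Since $\sum_{m \geq 1} \varepsilon(m) q^m / (1-q^m) = q/(1-q)$, the telescoping gives
\[
B_{k-1}(\varepsilon) = [q^k] \frac{q \, (q; q)_\infty}{1-q} = [q^{k-1}] \frac{(q; q)_\infty}{1-q}.
\]
Substituting this value together with the explicit formula $s_{n,k}^{(-1)}(f) = (p_k \ast \mu)(n) + (p_k \ast D_{n,f} \ast \mu)(n)$ from Theorem \ref{claim_snk_inverses} into the displayed sum above yields precisely the claimed identity.

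The argument is a direct specialization, so there is no serious obstacle; the only care required is bookkeeping the roles of the two arithmetic functions in the convolution $f \ast g$ of Proposition \ref{prop_OnePossibleLSFact} (here the ``fixed'' function is $f$ itself, while the ``unknown'' solved-for function is $f^{-1}$), and noting that $f(1) = 1$ serves simultaneously as the hypothesis guaranteeing the existence of $f^{-1}$ and as the normalization required to invoke Theorem \ref{claim_snk_inverses} with $g := f$.
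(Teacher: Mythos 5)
Your proposal is correct and follows essentially the same route as the paper: both identify $f^{-1}\ast f = \varepsilon$ so that the Lambert series over the convolution collapses to $q/(1-q)$, then apply Theorem \ref{claim_snk_inverses} with $g := f$ to read off the coefficient $[q^{k-1}](q;q)_\infty/(1-q)$. Your write-up is merely more explicit than the paper's about evaluating $B_{k-1}(\varepsilon)$ and about which slot of the convolution plays the role of the unknown function.
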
 
\begin{proof} 
The proof follows from Theorem \ref{claim_snk_inverses} applied in the form of 
Proposition \ref{prop_OnePossibleLSFact}. In particular, since 
$f^{-1} \ast f = \delta_{n,1}$ by definition, the right-hand-side of our Lambert series expansion 
over the convolved function $a_n := f^{-1} \ast f$ is given by $q / (1-q)$. 
\end{proof} 

We compare this formula for the Dirichlet inverse of an arithmetic function to the other primary 
known recursive divisor sum formula defining the inverse function of $f$ 
given by \cite[\S 2.7]{APOSTOL-ANUMT} 
\begin{align*} 
f^{-1}(1) = \frac{1}{f(1)},\ f^{-1}(n) = -\frac{1}{f(1)} \sum_{\substack{d|n \\ d<n}} 
     f(n/d) f^{-1}(d)\ \text{ for $n > 1$, } 
\end{align*} 
for $f(1) \neq 0$ and where it is known that $(f \ast g)^{-1} = f^{-1} \ast g^{-1}$ if 
$f(1), g(1) \neq 0$. The Dirichlet inverse of a function $f$ exists precisely when 
$f(1) \neq 0$, and by scaling our result in the corollary matches these cases as well. 

We also note that given any prescribed sequence of $b(n)$ we can generate $b(n)$ by the 
Lambert series over $b \ast \mu$. This implies that we have recurrence relations for any 
arithmetic function $b$ defined such that $b(n) = 0$ for all $n < 0$ expanded in the 
following two forms where $s_{n,k} := [q^n] (q; q)_{\infty} q^k / (1-q^k)$: 
\begin{align*} 
b(n) & = \sum_{k=1}^n \left(p_k \ast \mu+p_k \ast D_{n,\mu} \ast \mu\right)(n) \left[ 
     b(k) + \sum_{s = \pm 1} \sum_{j=1}^k (-1)^j b\left(k-\frac{j(3j+s)}{2}\right)\right] \\ 
b(n) & = \sum_{j=1}^n \sum_{k=1}^j \left(\sum_{i=1}^{\left\lfloor j/k \right\rfloor} 
     s_{j,ki} \cdot \mu(i)\right) b(k) p(n-j). 
\end{align*} 

\begin{cor}[Convolution Formulas for Arithmetic Functions] 
Suppose that we have two prescribed arithmetic functions $f$ and $h$ and we seek the form 
of a third $g$ satisfying $f \ast g = h \ast \mu$ for all $n \geq 1$. Then we have a formula for the 
function $g$ expanded in the form of 
\begin{align*}
g(n) & = \sum_{k=1}^n \left((p_k \ast \mu)(n) + (p_k \ast D_{n,f} \ast \mu)(n)\right) \times \\ 
     & \phantom{=\sum\ } \times 
     \left(h(k) + \sum_{s = \pm 1} \sum_{j=1}^{\left\lfloor \frac{\sqrt{24k+1}-s}{6} \right\rfloor} 
     (-1)^j h\left(k-\frac{j(3j+s)}{2}\right)\right). 
\end{align*} 
\end{cor}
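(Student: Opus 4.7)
The plan is to reduce the defining equation $f \ast g = h \ast \mu$ to an ordinary Lambert series identity and then invert it using the factorization machinery already established in Proposition \ref{prop_OnePossibleLSFact} and Theorem \ref{claim_snk_inverses}. First, I would convolve both sides of $f \ast g = h \ast \mu$ with the constant function $1$. Since $\mu \ast 1 = \varepsilon$, this collapses the right-hand side to $h$, yielding the divisor-sum identity $\sum_{d\mid n}(f \ast g)(d) = h(n)$. Translating via the standard Lambert-series-to-divisor-sum correspondence recalled at the start of Section \ref{Section_LSFactThms_CvlOfTwoFns_and_Apps}, we obtain
\begin{align*}
\sum_{n \geq 1} \frac{(f \ast g)(n)\, q^n}{1-q^n} \;=\; \sum_{n \geq 1} h(n)\, q^n.
\end{align*}

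Next, I would apply Proposition \ref{prop_OnePossibleLSFact} with $C(q) := (q;q)_\infty$ to the left-hand side, using the commutativity $f \ast g = g \ast f$ so that the factor we wish to recover, namely $g(k)$, is left as the outer argument while $f$ plays the role of the inner ``fixed'' convolution factor. This produces
\begin{align*}
(q;q)_\infty \sum_{n \geq 1} h(n) q^n \;=\; \sum_{n \geq 1} \sum_{k=1}^{n} \widetilde{s}_{n,k}(f)\, g(k)\, q^n.
\end{align*}
Expanding the left-hand side by Euler's pentagonal number theorem $(q;q)_\infty = \sum_{j \in \mathbb{Z}}(-1)^j q^{j(3j-1)/2}$ and extracting the coefficient of $q^n$ yields the triangular linear system
\begin{align*}
\sum_{k=1}^{n} \widetilde{s}_{n,k}(f)\, g(k) \;=\; h(n) + \sum_{s = \pm 1}\sum_{j=1}^{\lfloor (\sqrt{24n+1}-s)/6 \rfloor} (-1)^j\, h\!\left(n - \tfrac{j(3j+s)}{2}\right).
\end{align*}

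Finally, I would invert this system using Theorem \ref{claim_snk_inverses} applied with the inner function $g$ (in the notation of that theorem) replaced by $f$. The relevant inverse sequence is $s_{n,k}^{(-1)}(f) = (p_k \ast \mu)(n) + (p_k \ast D_{n,f} \ast \mu)(n)$, and substituting it into the inversion of the triangular system above gives precisely the claimed closed form for $g(n)$. The only potential bookkeeping hazard is aligning the summation bounds $\lfloor (\sqrt{24k+1}-s)/6\rfloor$ that arise after inversion with those in the statement, and verifying that convolution commutativity legitimately lets us interchange the roles of $f$ and $g$ inside Proposition \ref{prop_OnePossibleLSFact}; both are routine. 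The substantive work, namely producing the inverse sequences, has already been done in Theorem \ref{claim_snk_inverses}, so I expect no genuine obstacle beyond this routine verification.
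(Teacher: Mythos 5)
Your proposal is correct and follows exactly the route the paper intends: the paper's own proof is a one-line citation of Proposition \ref{prop_OnePossibleLSFact} and Theorem \ref{claim_snk_inverses}, and your argument (convolving with $1$ to reduce to $\sum_{d\mid n}(f\ast g)(d)=h(n)$, applying the factorization with the roles of $f$ and $g$ swapped, and inverting with $s_{n,k}^{(-1)}(f)$) is precisely the detail that makes that "immediate consequence" explicit.
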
 
\begin{proof} 
This result is an immediate consequence of Proposition \ref{prop_OnePossibleLSFact} and the formula 
for the inverse sequences defined by Theorem \ref{claim_snk_inverses}. 
\end{proof} 

\section{Conclusions} 

\subsection{Summary} 

In Section \ref{Section_GenFactThms_and_Examples} and 
Section \ref{Section_Variants_and_Examples} we proved two key new variations of the 
Lambert series factorization theorems studied in the references 
\cite{MERCA-SCHMIDT1,MERCA-SCHMIDT2,MERCA-LSFACTTHM,SCHMIDT-LSFACTTHM} each of 
which effectively generalize the expansions of these initial model results to the more 
general cases of the Lambert series expansions defined by 
\eqref{eqn_GenLambertSeries_LaAlphaBetaq_def} in the introduction. 
The propositions for special cases and the examples discussed in 
Section \ref{subSection_GenFactThmV1_SpCases_and_Examples} motivate the 
formulation of Theorem \ref{theorem1_GenFormula_for_snk} 
which provides generating functions for the 
factorization parameters, $s_{n,k}$, in the most general cases and which 
motivate its immediate consequence stated in 
Corollary \ref{cor_ConsequenceOfThm1_ConnectionBetweenOrdFactThms} 
providing the connection between our new generalized factorization 
theorems and the known ordinary cases established by the references. 

We have similarly proved a variation of the generalized factorization 
theorem stated by Theorem \ref{theorem2_AnotherGenFactThm} in 
Section \ref{Section_Variants_and_Examples} along the lines 
of the same motivations for the corresponding ``ordinary'' case result in 
\cite{MERCA-SCHMIDT2}. 
That is, instead of starting with the right-hand-side factorizations in 
\eqref{eqn_GenFactThmExp_def_v1}, we define the form of 
$s_{n,k}^{(-1)}$ which determines the entries of the 
matrix of $s_{n,k}$ and then proceed to explore the form of the resulting factorizations 
based on our definition. It turns out that the formulas in this case provide a natural 
analog to the corresponding result for the Lambert series cases expanded in the 
reference. 

The truly new results unique to this article are stated and proved in 
Section \ref{Section_LSFactThms_CvlOfTwoFns_and_Apps} where we provide a variant of the 
Lambert series factorization theorems given in the previous sections and in the references 
for a convolution of two arithmetic functions. These new results lead to applications for 
several well-known Dirichlet convolutions of classical arithmetic functions which are 
expanded by multiple $j$-fold convolutions of one of these classical functions with itself. 

\subsection{A note on further generalizations} 

It is possible to consider even more general factorization theorem results for 
Lambert series expansions of the form 
\begin{align*} 
L_a(\alpha, \beta; c, d; q) := \sum_{n \geq 1} 
     \frac{a_n c^n q^{\alpha n-\beta}}{1-d \cdot q^{\alpha n-\beta}}, 
\end{align*} 
for $\alpha, \beta$ defined in the conventions stated in the introduction and for 
some fixed constants $c, d \in \mathbb{C}$ defined such that the series in the 
previous equation converges when $|q^{\alpha}| < 1$, though for the most part 
we do not consider such expansions here. We do however motivate these expansions 
when $c := 1$ in the second degenerate case of Theorem \ref{theorem1_GenFormula_for_snk} 
discussed in Remark \ref{remark_conj_degen_cases_of_thm1} by demonstrating the 
similarities and relations to the corresponding ordinary degenerate case of this series 
where $d \equiv \pm 1$. 

Except in a few rare cases of Lambert series expansions related to 
special functions where $d := -1$, the consideration of the additional parameters 
$c,d$ does not seem to add much utility to phrasing our notable new identities for 
the classical special functions. We also note that in this particular case we have a 
transformation identity for the ordinary cases of the Lambert series expansions in 
\eqref{eqn_GenLambertSeries_LaAlphaBetaq_def} of the form \cite{MERCA-SCHMIDT2} 
\begin{align*}
\sum_{n=1}^{\infty} \frac{a_n q^n}{1+q^n} 
&  = \sum_{n=1}^{\infty} \frac{a_n q^n}{1-q^n}  - 2\sum_{n=1}^{\infty} \frac{a_n q^{2n}}{1-q^{2n}} \\
&  = \sum_{n=1}^{\infty} \frac{b_n q^n}{1-q^n} ,
\end{align*}
where 
$$
b_n = \begin{cases}
a_n, & \text{for $n$ odd,}\\
a_n-2a_{n/2} & \text{for $n$ even.}\\
\end{cases}
$$
We therefore leave such generalizations as a topic suggested for 
future work separate from the results we have established within this article. 

\subsection{Other Variants of the Factorization Theorems}
\label{subSection_Concl_OtherFactThmExps} 

There are many other variations of the Lambert series factorization theorems we have proved in 
Proposition \ref{prop_OnePossibleLSFact} and in the previous sections whose 
properties we have still left yet unexplored. 
As an example of what other useful factorizations are possible, 
for any fixed arithmetic function $a_n$ let $A(x) := \sum_{n \leq x} a_n$. Then we may 
consider the properties, i.e., inverses, etc., of the factorizations of the 
following forms, among several other possibilities: 
\begin{align*} 
\sum_{n \geq 1} \frac{A(n) q^n}{1-q^n} & = \frac{1}{(q; q)_{\infty}} 
     \sum_{n \geq 1} \sum_{k=1}^n s_{1,n,k} \cdot a_k \cdot q^n \\ 
\sum_{n \geq 1} \frac{a_n q^n}{1-q^n} & = \frac{1}{(q; q)_{\infty}} 
     \sum_{n \geq 1} \sum_{k=1}^n s_{2,n,k} \cdot A(k) \cdot q^n. 
\end{align*} 
The factorization expanded in the second of the previous equations has the benefit that to analyze the 
average order $A(x)$ of the prescribed arithmetic function it is only necessary to examine the 
properties of the sequence of inverses, $s_{2,n,k}^{(-1)}$, and the form of the function $a_n \ast 1$. 
Then using the method of proof in Theorem \ref{theorem1_GenFormula_for_snk}, we can show that 
\begin{align*} 
s_{1,n,k} & = d(n) + \sum_{s = \pm 1} \sum_{j=1}^{\left\lfloor \frac{\sqrt{24k+1}-s}{6} \right\rfloor} 
     (-1)^j d\left(n-\frac{j(3j+s)}{2}\right) - \sum_{i=1}^{k-1} s_{n,i}, 
\end{align*} 
and that 
\begin{align*} 
s_{n,k} & = \sum_{i=0}^{n-k} s_{2,n,n-i} \quad\text{ and }\quad 
     s_{2,n,k} = s_{n,k}-s_{n,k+1}, 
\end{align*} 
where $s_{n,k} = [q^n] q^k / (1-q^k) \cdot (q; q)_{\infty}$ denotes the difference of 
the number of $k$'s in all partitions of $n$ into an odd number of distinct parts and into 
an even number of parts. These identities also imply that 
\begin{align*} 
(A \ast 1)(n) = \sum_{k=1}^n s_{1,n,k} \cdot a_k \quad\text{ and }\quad 
(a \ast 1)(n) = \sum_{k=1}^n s_{2,n,k} \cdot A(k), 
\end{align*} 
for any prescribed arithmetic function $a_n$. 
Additionally, we can show that 
\begin{align*} 
s_{1,n,k}^{(-1)} & = s_{n,k}-s_{n-1,k} \Iverson{n>1} \\ 
     & = \sum_{d|n} p(d-k) \mu(n/d) - 
     \sum_{d|n-1} p(d-k) \mu((n-1)/d) \Iverson{n > 1} \\ 
s_{2,n,k}^{(-1)} & = \sum_{j=1}^n s_{j,k} 
     = \sum_{j=1}^n \sum_{d|j} p(d-k) \mu(j/d), 
\end{align*} 
which then implies new exact identities such as the following expansions involving 
$a_n$ for $n \geq 2$: 
\begin{align*} 
a_n & = \sum_{k=1}^n \sum_{s \in \{0, 1\}} \sum_{d|n-s} (-1)^s p(d-k) \mu\left(\frac{n-s}{d}\right) \cdot 
     [q^k] \left(\sum_{m \geq 1} (A \ast 1)(m) q^m \times (q; q)_{\infty}\right) \\ 
A(n) & = \sum_{k=1}^n \sum_{j=1}^n \sum_{d|j} p(d-k) \mu(j/d) \cdot 
     [q^k] \left(\sum_{m \geq 1} (a \ast 1)(m) q^m \times (q; q)_{\infty}\right). 
\end{align*} 
Some properties of the 
generalized cases of these factorization theorems are apparent by inspection of the examples cited above. 
In particular, suppose that we have factorizations of the form 
\begin{align*} 
\sum_{n \geq 1} \frac{a_n q^n}{1-q^n} & = \frac{1}{C(q)} \sum_{n \geq 1} \sum_{k=1}^n 
     s_{n,k} \cdot a_k \cdot q^n \\ 
\sum_{n \geq 1} \frac{a_n q^n}{1-q^n} & = \frac{1}{C(q)} \sum_{n \geq 1} \sum_{k=1}^n 
     \widetilde{s}_{n,k} \left(\sum_{i=1}^k b_i a_i\right) \cdot q^n, 
\end{align*} 
for some sequence $b_i$ of non-zero functions. Then it is not difficult to prove that 
\begin{align*} 
\widetilde{s}_{n,k} & = \frac{s_{n,k}}{b_k} - \frac{s_{n,k+1}}{b_{k+1}} 
     \quad\text{ and }\quad 
\widetilde{s}_{n,k}^{(-1)} = \sum_{i=1}^n b_i \cdot s_{i,k}^{(-1)}. 
\end{align*} 
To give some additional possibilities for factorization theorems to consider based on the 
results in this article and in the references, we note that we may expand 
\begin{align*} 
\sum_{n \geq 1} \frac{a_n q^n}{1-q^n} & = \frac{1}{C(q)} \sum_{n \geq 1} \sum_{k=1}^n 
     s_{n,k}(a) \cdot a_k^{-1} \cdot q^n \\ 
\sum_{n \geq 1} \frac{A(n) q^n}{1-q^n} & = \frac{1}{C(q)} \sum_{n \geq 1} \sum_{k=1}^n 
     s_{n,k}(a) \left(\sum_{i=1}^k \frac{a_i}{i}\right) \cdot q^n
\end{align*} 
where $a_n^{-1}$ is the Dirichlet inverse of $a_n$, and that we may expand 
\begin{align*} 
\sum_{n \geq 1} \frac{\widetilde{a}_n q^n}{1-q^n} & = \frac{1}{C(q)} \sum_{n \geq 1} \sum_{k=1}^n 
     s_{n,k}(\gamma) \cdot A(k) \cdot q^n
\end{align*} 
where we define $s_{n,k}(\gamma)$ implicitly by 
$s_{n,k}^{(-1)} := \sum_{d|n} [q^{d-k}] \frac{1}{C(q)} \cdot \gamma(n/d)$ and where we 
conjecture that the function 
$\widetilde{a}_n$ is defined explicitly as the following sum involving the $A(k)$ and the $j$-fold 
convolution functions $D_{n,\gamma}(k)$ from 
Section \ref{Section_LSFactThms_CvlOfTwoFns_and_Apps}: 
\begin{align*} 
\widetilde{a_n} & = \sum_{d|n} \sum_{\substack{r|d \\ r > 1}} A(n/d) D_{n,\gamma}(r) \mu(d/r) + 
     \sum_{d|n} A(d) \mu(n/d). 
\end{align*} 
For the most part, except for the remarks given above in this subsection, 
we leave these and other expansions of analogous factorization theorems as a topic of future research 
based on the work in this article and in the references 
\cite{MERCA-SCHMIDT1,MERCA-SCHMIDT2,MERCA-LSFACTTHM,SCHMIDT-LSFACTTHM}. 

\subsection*{Acknowledgments} 

The authors thank the referees for their helpful insights and comments on 
preparing the manuscript.

\end{document}